\documentclass[letterpaper,10pt,conference]{ieeeconf}
\IEEEoverridecommandlockouts
\usepackage{amsmath,amssymb,mathtools,bm}
\usepackage{graphicx}
\usepackage{booktabs,multirow,array}
\usepackage{tikz}
\usepackage{balance}
\usepackage{cuted}
\let\labelindent\relax 
\usepackage{enumitem}

\makeatletter
\let\NAT@parse\undefined
\makeatother

\makeatletter
\let\NAT@parse\undefined
\makeatother

\usepackage[numbers,sort&compress]{natbib}

\usetikzlibrary{arrows.meta,positioning,fit}

\input{requirements}
\def\[#1\]{\begin{align*}#1\end{align*}}

\newcounter{qst}
\crefname{qst}{Question}{Questions}

\providecommand{\R}{\mathbb{R}}
\providecommand{\Sbb}{\mathbb{S}}
\providecommand{\cC}{\mathcal{C}}
\providecommand{\cF}{\mathcal{F}}
\providecommand{\cS}{\mathcal{S}}
\providecommand{\cE}{\mathcal{E}}
\providecommand{\cM}{\mathcal{M}}
\providecommand{\cZ}{\mathcal{Z}}

\providecommand{\distop}{\operatorname{dist}}
\providecommand{\diamop}{\operatorname{diam}}

\providecommand{\opnorm}[1]{\left\|#1\right\|_{\mathrm{op}}}

\providecommand{\inner}[2]{\left\langle #1,#2\right\rangle}
\providecommand{\transpose}{\mathsf{T}}

\newcommand{\defeq}{\coloneqq}

\let\hat\widehat
\let\tilde\widetilde

\newcommand{\norm}[1]{\|#1\|}

\makeatletter

  \usepackage{amsthm}

\newtheorem{theorem}{Theorem}
\newtheorem{lemma}[theorem]{Lemma}
\newtheorem{proposition}[theorem]{Proposition}
\newtheorem{corollary}[theorem]{Corollary}

\newtheorem{definition}[theorem]{Definition}
\newtheorem{assumption}[theorem]{Assumption}

\newtheorem{remark}[theorem]{Remark}

\usepackage{stmaryrd}

\usepackage{xcolor}

\usepackage[linesnumbered,ruled,lined,noend]{algorithm2e}

\NewDocumentCommand{\numberthis}{om}{%
  \IfNoValueTF{#1}{%
    \refstepcounter{equation}\tag{\theequation}%
  }{%
    \tag{#1}%
  }%
  \label{#2}%
}

\definecolor{darkgrey}{gray}{0.3}
\definecolor{commentcolor}{gray}{0.5}
\SetKwComment{Hline}{}{\vspace{-3mm}\textcolor{gray}{\hrule}\vspace{1mm}}
\SetKwComment{Comment}{\color{commentcolor}[$\triangleright$\ }{}
\SetCommentSty{}
\SetNlSty{}{\color{darkgrey}}{}
\SetKwProg{Fn}{function}{}{}
\SetKwProg{Subr}{subroutine}{}{}
\crefalias{AlgoLine}{line}%
\crefname{algocf}{Algorithm}{Algorithms}

\title{\LARGE\bf
Algorithmic Optimality Guarantees for Nonsmooth $H_\infty$ Output-Feedback Policy Search}

\author{Ashkan Soleymani and Patrick Jaillet%
\thanks{Department of Electrical Engineering and Computer Science and the Laboratory for Information and Decision Systems, Massachusetts Institute of Technology, Cambridge, MA 02139, USA. E-mail addresses: {\tt\small ashkanso@mit.edu} and {\tt\small jaillet@mit.edu}.}}

\begin{document}
\maketitle
\thispagestyle{empty}
\pagestyle{empty}

\begin{abstract}
We study continuous-time full-order dynamic output-feedback $H_\infty$ policy search, a nonconvex and nonsmooth problem. Direct policy search is a central paradigm in reinforcement learning and continuous control, but rigorous guarantees remain scarce in robust output-feedback settings. The $H_\infty$ problem is a canonical benchmark because it captures disturbance attenuation and robustness while exposing the hard nonsmooth geometry of policy-space optimization. We prove that on the exact identity-gauge slice of the extended convex lift, $\varepsilon$-stationarity yields $O(\varepsilon)$-suboptimality on compact exact slices, which in turn yields convergence-rate guarantees for nonsmooth policy-search methods. This result addresses the finite-time optimality-gap question raised by \citet{guo2022global} in the more general dynamic output-feedback $H_\infty$ policy-search setting. We further use the established value equivalence supplied by extended convex lifting to formulate a nonstrict-feasibility bisection method with one final strict-feasibility recovery step, yielding an explicit $\varepsilon$-optimal stabilizing controller. These results provide a quantitative and algorithmic strengthening of prior qualitative optimality theory for nonsmooth $H_\infty$ policy search.
\end{abstract}

\vspace{-0.15cm}
\section{Introduction}
Direct policy search, or policy optimization, refers to optimizing a parameterized controller directly against the closed-loop performance objective. It is a core paradigm in reinforcement learning (RL) and continuous control. The policy-gradient theorem of \citet{sutton1999policy} made this viewpoint algorithmic, and later deterministic, trust-region, and proximal variants became standard tools for continuous-action RL \citep{silver2014deterministic,schulman2015trust,schulman2017proximal}. These methods have driven influential successes in simulated robotics and continuous control, including locomotion and end-to-end learning from high-dimensional observations \citep{lillicrap2015continuous,recht2019tour}. From a control viewpoint, direct policy search is attractive because it is end-to-end, accommodates richly parameterized controllers, and is compatible with model-based, model-free, and data-driven implementations \citep{fazel2018global,zhang2021policy}.

This viewpoint has recently reshaped parts of systems and control. A growing literature studies the geometry and algorithmics of policy optimization for the linear-quadratic regulator (LQR), linear-quadratic-Gaussian (LQG) control, output estimation, and robust control, revealing benign landscapes in some settings and geometric obstructions in others \citep{fazel2018global,zhang2021policy,umenberger2022globally,zheng2025extended}. The partially observed case is especially delicate. Dynamic output-feedback controllers carry their own internal state, and controllers related by similarity transformations represent the same input-output law. This redundancy creates degenerate directions, complicates stationarity, and can obstruct naive Euclidean descent \citep{hu2022connectivity,kraisler2024output}. Recent work on output estimation and LQG makes this difficulty explicit and shows that suitable reparameterizations or quotient-geometric viewpoints can restore meaningful first-order analysis \citep{umenberger2022globally,kraisler2024output}.

Among robust synthesis problems, $H_\infty$ control is a natural stress test for direct policy search. It is classical and practically central, but in policy space the objective is nonconvex and nonsmooth. In the state-feedback setting, \citet{guo2022global} proved that every Clarke-stationary stabilizing gain is globally optimal and showed that Goldstein-type methods can recover the global solution. For dynamic output feedback, \citet{tang2023global} established global optimality of Clarke-stationary controllers under a nondegenerate exact-certificate condition. More broadly, recent extended convex lifting (ECL) results reveal hidden convexity behind optimal and robust control problems and show that nondegenerate first-order stationary points are globally optimal across several benchmark settings, including output-feedback $H_\infty$ synthesis \citep{zheng2026benign,zheng2025extended}. Recent work  established weak convexity and deterministic subgradient rates for discrete-time robust control, although the weak Polyak--\L{}ojasiewicz conclusion remains confined to the state-feedback case \citep{watanabe2025policy}.

The remaining gap is quantitative and algorithmic. The theorem of \citet{tang2023global} is qualitative: it certifies exact Clarke-stationary points, but it does not convert an approximate stationarity residual into a value-gap bound and, consequently, does not provide algorithms with convergence rates for finding an optimal policy. Nor does it separate the two roles played by the lifted variables. One role is global: the full nonstrict lift captures the closure of strict stabilizing upper-bound certificates and therefore should be used to compute the optimal value. The other is local: after exactification and gauge fixing, an exact identity slice provides the appropriate chart for first-order certification and, hence, a foundation for the analysis of nonsmooth optimization algorithms for policy search. Keeping these roles distinct leads to sharper geometric insight and a more usable algorithmic interpretation.

Our main message is that the ECL has complementary local and global roles; see Figure~\ref{fig:two-roles}. The stable exact identity-gauge slice is the appropriate chart for local first-order certification and, hence, for analyzing the convergence rates of nonsmooth optimization algorithms for policy search, while the full nonstrict convex lifted set computes the global infimal value through closure and strict lifted points support stabilizing controller recovery. Building on this separation, our contributions are fourfold.
\begin{enumerate}
    \item We prove an exact \emph{reconditioning} lemma showing every exact nondegenerate certificate is realization-equivalent to one with $P_{12}=I$.
    \item On that identity-gauge exact slice, we establish a pairwise first-order error bound, implying that on every compact exact slice, $\varepsilon$-stationarity yields $O(\varepsilon)$-suboptimality.
    \item Building on this suboptimality bound, we study the convergence rates of off-the-shelf nonsmooth optimization algorithms for finding Goldstein stationary points to the optimal policy in the dynamic output-feedback $H_\infty$ policy search problem.
    \item Using the established value-equivalence theorem for extended convex lifting and its dynamic output-feedback $H_\infty$ specialization \citep[Theorem~2.1 and Corollary~4.1]{zheng2025extended}, we formulate a nonstrict-feasibility bisection procedure in the present generalized-plant coordinates. One final strict-feasibility recovery step returns an explicit $\varepsilon$-optimal stabilizing controller.
\end{enumerate}

\emph{Relation to classical $H_\infty$ synthesis:} Classical Riccati and linear matrix inequality (LMI) methods already solve full-order $H_\infty$ synthesis in model-based settings~\citep{doyle1988state,masubuchi1998lmi}. Our focus is instead the nonsmooth policy-optimization viewpoint, where the extended convex lift enables quantitative first-order guarantees and algorithmic controller recovery. This perspective is particularly relevant to optimization-based, certificate-driven and potentially data-driven methods.

\begin{figure*}[t]
\centering
\begin{tikzpicture}[
    scale=0.88,
    transform shape,
    >=Latex,
    node distance=5mm and 6mm,
    every node/.style={font=\footnotesize},
    box/.style={
        draw,
        rounded corners,
        align=center,
        minimum width=3.2cm,
        minimum height=0.9cm
    }
]
  \node[box] (C) {Controller Space\\$K\in\cC$};
  \node[box, right=18mm of C] (F) {Full Lift\\$z\in\cF$};
  \node[box, below=8mm of F] (E) {Exact Identity Slice\\$z\in\cE_I\subset\cF$};
  \node[box, right=14mm of F,draw=myMaroon!90!black,
  fill=myMaroon!20, line width=1pt] (G) {Global Value\\Bisection in $\gamma$};
  \node[box, right=14mm of E] (L) {Local certificate and Suboptimality\\Bound from $\distop(0,\partial J)$};
  \node[box, right=14mm of L, draw=myMaroon!90!black,
  fill=myMaroon!20, line width=1pt] (N) {Analysis of Nonsmooth \\ Optimization Algorithms};

  \draw[->, thick] (C) -- node[above] {ECL chart} (F);
  \draw[->, thick] (F) -- node[right] {Exactify/Gauge-fix} (E);
  \draw[->, thick] (F) -- (G);
  \draw[->, thick] (E) -- (L);
  \draw[->, thick] (L) -- (N);
\end{tikzpicture}
\caption{Two roles of the lift. The full set $\cF$ computes the global value, while the exact slice $\cE_I$ yields local first-order certificates and supports the analysis of convergence rates for nonsmooth optimization algorithms.}
\label{fig:two-roles}
\vspace{-1ex}
\end{figure*}

\section{Problem Definition} \label{sec:preliminaries}

\subsection{Dynamic output-feedback $H_\infty$ policy search}
We consider the continuous-time linear time-invariant plant
\begin{align}
\dot x(t) &= A x(t) + B_1 w(t) + B_2 u(t), \label{eq:plant1}\\
 z(t) &= C_1 x(t) + D_{11} w(t) + D_{12} u(t), \label{eq:plant2}\\
 y(t) &= C_2 x(t) + D_{21} w(t), \label{eq:plant3}
\end{align}
where $x(t)\in\R^{n_x}$ is the plant state, $w(t)$ is the exogenous input, $u(t)$ is the control input, $z(t)$ is the performance output, and $y(t)$ is the measured output. As in \citet{tang2023global}, we assume that $(A,B_1)$ and $(A,B_2)$ are controllable and that $(C_1,A)$ and $(C_2,A)$ are observable. We restrict attention to full-order dynamic controllers of the form
\begin{align}
\dot \xi(t) &= A_K \xi(t) + B_K y(t), \label{eq:controller1}\\
 u(t) &= C_K \xi(t) + D_K y(t), \label{eq:controller2}
\end{align}
where $\xi(t)\in\R^{n_x}$ is the controller state. We identify the controller with the block matrix
\[
K = \begin{bmatrix} D_K & C_K \\ B_K & A_K \end{bmatrix}.
\]
Throughout, controller space is viewed as a finite-dimensional Euclidean space equipped with the Frobenius inner product and its induced norm.

The closed-loop transfer matrix from $w$ to $z$ is
\[
T_{zw}(s;K)=C_{\mathrm{cl}}(K)(sI-A_{\mathrm{cl}}(K))^{-1}B_{\mathrm{cl}}(K)+D_{\mathrm{cl}}(K),
\]
where the closed-loop matrices are the standard ones induced by \eqref{eq:plant1}--\eqref{eq:controller2}; explicitly,
\begin{align*}
A_{\mathrm{cl}}(K)&=
\begin{bmatrix}
A+B_2D_KC_2 & B_2C_K \\
B_KC_2 & A_K
\end{bmatrix},\\
B_{\mathrm{cl}}(K)&=
\begin{bmatrix}
B_1+B_2D_KD_{21}\\
B_KD_{21}
\end{bmatrix},\\
C_{\mathrm{cl}}(K)&=
\begin{bmatrix}
C_1+D_{12}D_KC_2 & D_{12}C_K
\end{bmatrix},\\
D_{\mathrm{cl}}(K)&=D_{11}+D_{12}D_KD_{21}.
\end{align*}
The stabilizing set is
\[
\cC:=\{K: A_{\mathrm{cl}}(K)\text{ is Hurwitz}\},
\]
and the $H_\infty$ objective is
\[
J(K):=\norm{T_{zw}(\cdot;K)}=\|T_{zw}(\cdot;K)\|_{\infty},\qquad K\in \cC.
\]
Hence the policy search problem is
\begin{equation}\label{eq:policyopt}
J^* \defeq \inf_{K\in\cC} J(K).
\end{equation}

A key fact from \citet[Proposition~1]{tang2023global} is that $J$ is locally Lipschitz and subdifferentially regular on $\cC$.

\begin{proposition}\label{prop:TZ-regular}
For problem \eqref{eq:policyopt}, the mapping $J:\cC\to\R$ is locally Lipschitz and subdifferentially regular on $\cC$. In particular, for every $K\in\cC$ and every direction $V$, the directional derivative $J'(K;V)$ exists and satisfies
\[
J'(K;V)=J^{\circ}(K;V)=\max_{G\in\partial J(K)}\inner{G}{V},
\]
where $\partial J(K)$ denotes the Clarke subdifferential.
\end{proposition}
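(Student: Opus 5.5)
We plan to prove Proposition~\ref{prop:TZ-regular} by writing $J$ as a composition of a smooth map into a space of stable realizations with the $H_\infty$ norm, and then applying the standard Clarke calculus for pointwise maxima of smooth functions over a compact index set.

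\textbf{Step 1: smoothness of the closed loop.} The closed-loop matrices $A_{\mathrm{cl}}(K),B_{\mathrm{cl}}(K),C_{\mathrm{cl}}(K),D_{\mathrm{cl}}(K)$ are affine in $K$. The set $\cC$ is open, since the eigenvalues of $A_{\mathrm{cl}}(K)$ depend continuously on $K$. For fixed $\omega\in\R$, the map $K\mapsto T_{zw}(j\omega;K)$ is therefore real-analytic on $\cC$, because $(j\omega I-A_{\mathrm{cl}}(K))^{-1}$ exists whenever $A_{\mathrm{cl}}(K)$ is Hurwitz.

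\textbf{Step 2: a max representation on a compact index set.} We write
\[
J(K)=\max_{\omega\in\bar\R}\ \max_{\|u\|=\|v\|=1}\operatorname{Re}\, u^{*}T_{zw}(j\omega;K)v ,
\]
where $\bar\R=\R\cup\{\infty\}$ and $T_{zw}(j\infty;K)=D_{\mathrm{cl}}(K)$. Compactness needs care. Near a fixed $K_0\in\cC$ there is a uniform stability margin, and $T_{zw}(j\omega;K)\to D_{\mathrm{cl}}(K)$ as $\omega\to\infty$ uniformly in $K$ near $K_0$, with uniform first-derivative bounds. We reparametrize $\omega=\tan\theta$ with $\theta\in[-\pi/2,\pi/2]$ to get a compact index set $Y=[-\pi/2,\pi/2]\times\Sbb\times\Sbb$. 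The function
\[
f(K,y)=\operatorname{Re}\, u^{*}T_{zw}(j\tan\theta;K)v
\]
is continuous in $(K,y)$, and $\nabla_K f$ is jointly continuous, including at $\theta=\pm\pi/2$. The last point holds because $\nabla_K T_{zw}(j\omega;K)\to\nabla_K D_{\mathrm{cl}}(K)$ uniformly, which follows from the resolvent being $O(1/\omega)$.

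\textbf{Step 3: apply the Clarke max rule.} With $J(K)=\max_{y\in Y} f(K,y)$, $f(\cdot,y)$ continuously differentiable, and $\nabla_K f$ jointly continuous on a neighborhood of $K_0$ times $Y$, $J$ is a lower-$C^1$ function. By Clarke's theorem on pointwise maxima (Clarke 1983, Thm.~2.8.2 and Cor.~2), $J$ is locally Lipschitz near $K_0$ and Clarke regular. Its subdifferential is
\[
\partial J(K_0)=\operatorname{conv}\{\nabla_K f(K_0,y): y\in Y(K_0)\},
\]
where $Y(K_0)$ is the set of active indices. The same theorem, a Danskin-type result, gives the formula
\[
J'(K_0;V)=\max_{y\in Y(K_0)}\inner{\nabla_K f(K_0,y)}{V}.
\]
Regularity means $J'=J^\circ$, and $J^\circ(K_0;\cdot)$ is always the support function of $\partial J(K_0)$. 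Together these give the displayed identity.

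\textbf{Main obstacle.} The delicate step is the uniformity at $\omega\to\infty$ in Step 2, which is what legitimizes the compact index set. We handle it with the bound
\[
\|(j\omega I-A_{\mathrm{cl}}(K))^{-1}\|\le c/(1+|\omega|),
\]
valid uniformly on a compact neighborhood of $K_0$ in $\cC$. This bound follows from a Neumann series for large $|\omega|$ and from continuity on bounded frequency ranges.
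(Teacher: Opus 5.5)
Your argument is correct. It differs from the paper's treatment mainly because the paper gives no proof of this statement: it imports it verbatim from \citet[Proposition~1]{tang2023global}, which rests on the classical frequency-domain analysis of the closed-loop $H_\infty$ norm from the nonsmooth synthesis literature.

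You reconstruct that analysis directly. Writing $\sigma_{\max}$ as a maximum of the linear functionals $\operatorname{Re}\,u^{*}(\cdot)v$ makes $J$ a pointwise maximum of smooth functions over a compact index set, so $J$ is lower-$C^1$ near each $K_0\in\cC$. Local Lipschitzness, Clarke regularity, the active-set formula for $\partial J(K_0)$, and the Danskin formula for $J'(K_0;V)$ then all follow from one theorem.

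The only technical point is that compactifying the frequency axis must keep $f$ and $\nabla_K f$ jointly continuous at $\omega=\pm\infty$. You handle this correctly with the uniform bound $\|(j\omega I-A_{\mathrm{cl}}(K))^{-1}\|\le c/(1+|\omega|)$ on a compact neighborhood of $K_0$. That bound comes from a Neumann series for $|\omega|\ge 2\max\|A_{\mathrm{cl}}(K)\|$, and from continuity plus the absence of imaginary-axis eigenvalues on bounded frequency ranges. It suffices because every term of $\nabla_K T_{zw}$ other than $\nabla_K D_{\mathrm{cl}}$ contains at least one resolvent factor.

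What your route adds over the citation is self-containment and an explicit description, $\partial J(K_0)=\operatorname{conv}\{\nabla_K\operatorname{Re}\,u^{*}T_{zw}(j\omega;K_0)v:\ (\omega,u,v)\ \text{active, including }\omega=\pm\infty\}$. This is what one would actually need to evaluate the residual $\distop(0,\partial J(K))$ in Theorem~\ref{thm:pairwise} and Corollary~\ref{cor:algorithmic}. The citation buys brevity.

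One cosmetic fix: your $\Sbb$ must mean the complex unit spheres of the output and input dimensions. That clashes with the paper's use of $\Sbb$ for symmetric matrices, so use a different symbol.
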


For locally Lipschitz regular functions, the ordinary directional derivative coincides with the Clarke directional derivative, which in turn equals the support function of the Clarke subdifferential; see, for example, \citep[Section~2.3]{clarke1990optimization} or \citep[Chapter~8]{rockafellar1998variational}. We conclude this section with the definition of the Clarke and Goldstein subdifferential and stationary points for completeness~\citep{clarke1990optimization,goldstein1977optimization}.

\begin{definition}
    Given a locally Lipschitz function $f:C \rightarrow \mathbb{R}$, its Clarke subdifferential is defined as
    \[
    \partial f(x)
    =
    \operatorname{conv}
    \left\{
    \lim_{x_k \to x} \nabla f(x_k)
    \;\middle|\;
    \nabla f(x_k)\ \text{exists},\ x_k \in \mathcal C
    \right\},
    \]
    and its Goldstein subdifferential is defined as
    \[
    \partial_\delta f(x)
    =
    \operatorname{conv}  \left\{
    \bigcup_{y \in B_2(x, \delta)} \partial f(y)
    \right\}.
    \]
    A point $x$ is called an $\varepsilon$-Clarke stationary point if $\distop(0, \partial f(x)) \leq \varepsilon$, and an $(\delta, \varepsilon)$-Goldstein stationary point if $\distop(0, \partial_\delta f(x)) \leq \varepsilon$, where $\distop$ denotes the Euclidean distance.
\end{definition}

The Clarke stationary point serves as a primary solution concept for nonsmooth optimization; however, it suffers from significant computational intractability (e.g., there is no algorithm that can find Clarke stationary points in finite time \citep[Theorem~5]{zhang2020complexity}). However, recent advances show that Goldstein stationarity, as a more relaxed solution concept, offers algorithmic tractability~\citep{zhang2020complexity,davis2022gradient}.

\begin{figure*}[!t]
\noindent\makebox[\textwidth][c]{\rule{\textwidth}{0.4pt}}
\begin{equation}\label{eq:M-def}
\resizebox{0.92\textwidth}{!}{$\displaystyle
\cM(z):=
\begin{bmatrix}
AX+B_2F+(AX+B_2F)^\transpose & M^\transpose + A + B_2LC_2 & B_1+B_2LD_{21} & (C_1X+D_{12}F)^\transpose \\
M+(A+B_2LC_2)^\transpose & YA+HC_2+(YA+HC_2)^\transpose & YB_1+HD_{21} & (C_1+D_{12}LC_2)^\transpose \\
(B_1+B_2LD_{21})^\transpose & (YB_1+HD_{21})^\transpose & -\gamma I & (D_{11}+D_{12}LD_{21})^\transpose \\
C_1X+D_{12}F & C_1+D_{12}LC_2 & D_{11}+D_{12}LD_{21} & -\gamma I
\end{bmatrix}$}
\end{equation}
\vspace{0.3em}
\noindent\makebox[\textwidth][c]{\rule{\textwidth}{0.4pt}}
\end{figure*}

\subsection{Nondegenerate bounded-real certificates and Extended Convex Lifting}
Define the matrix $ N(K,P,\gamma) := $ 
\begin{equation}\label{eq:N-def}
\begin{aligned}
\left[
\begin{array}{ccc}
A_{\mathrm{cl}}(K)^\top P + P A_{\mathrm{cl}}(K) & P B_{\mathrm{cl}}(K) & C_{\mathrm{cl}}(K)^\top \\
B_{\mathrm{cl}}(K)^\top P & -\gamma I & D_{\mathrm{cl}}(K)^\top \\
C_{\mathrm{cl}}(K) & D_{\mathrm{cl}}(K) & -\gamma I
\end{array}
\right].
\end{aligned}
\end{equation}
Following \citet{tang2023global}, let
\begin{equation}\label{eq:Snd-def}
\begin{aligned}
\cS_{\mathrm{nd}}
:=
\Bigl\{(K,P,\gamma):\;&
P=\begin{bmatrix}P_{11}&P_{12}\\P_{12}^\transpose&P_{22}\end{bmatrix}\in\Sbb^{2n_x}_{++},\\
& P_{12}\in GL_{n_x},\;
N(K,P,\gamma)\preceq 0
\Bigr\}.
\end{aligned}
\end{equation}
The nonstrict inequality in \eqref{eq:Snd-def} does not by itself imply internal stability. If $(K,P,\gamma)\in\cS_{\mathrm{nd}}$ and $K\in\cC$, then the nonstrict bounded-real lemma yields $J(K)\le\gamma$ \citep{tang2023global}. We therefore distinguish the algebraic certificate set $\cS_{\mathrm{nd}}$ from its stabilizing part
\[
\cS_{\mathrm{nd}}^{\mathrm{st}}
:=\{(K,P,\gamma)\in\cS_{\mathrm{nd}}:K\in\cC\}.
\]
The associated exact nondegenerate controller set is
\begin{equation}\label{eq:Cnd-def}
\cC_{\mathrm{nd}}:=\{K\in\cC:\exists P\succ 0\text{ such that }(K,P,J(K))\in\cS_{\mathrm{nd}}\}.
\end{equation}
\citet{tang2023global} prove that every \emph{exact} Clarke-stationary controller $x \in \cC_{\mathrm{nd}}$ ($0 \in \partial f(x)$) is globally optimal. Their analysis rests on an extended convex lifting. Introduce the lifted variable
\[
z=(X,Y,M,H,F,L,\gamma),
\]
where $X,Y\in\Sbb^{n_x}$, $M\in\R^{n_x\times n_x}$, $H\in\R^{n_x\times n_y}$, $F\in\R^{n_u\times n_x}$, $L\in\R^{n_u\times n_y}$, and $\gamma\in\R$. Define the affine matrix map $\cM(z)$ as in \eqref{eq:M-def}, and the convex lifted feasible set
\begin{equation}\label{eq:F-def}
\cF:=\left\{ z:
\begin{bmatrix}X&I\\I&Y\end{bmatrix}\succ 0,
\; \cM(z)\preceq 0
\right\}.
\end{equation}
\citet{tang2023global} construct a real-analytic diffeomorphism
\begin{equation}\label{eq:phi}
\Phi:\cS_{\mathrm{nd}}\to GL_{n_x}\times \cF,
\end{equation}
whose first coordinate is the certificate block $P_{12}$, and write $\Psi=\Phi^{-1}$ for its inverse \citep[Proposition~2]{tang2023global}. We will use only the following consequence.

\begin{proposition}[ECL diffeomorphism]\label{prop:TZ-ECL}
There exists a real-analytic diffeomorphism $\Phi:\cS_{\mathrm{nd}}\to GL_{n_x}\times \cF$ with inverse $\Psi$, and the first coordinate of $\Phi(K,P,\gamma)$ equals $P_{12}$. Equivalently, if
\[
\Psi(\Xi,z)=(K,P,\gamma),
\]
then the $(1,2)$ block of $P$ equals $\Xi$. In particular, $\Psi(\Xi,z)\in\cS_{\mathrm{nd}}$ for all $(\Xi,z)\in GL_{n_x}\times\cF$. The congruence relation used to construct $\Phi$ and $\Psi$ preserves strict definiteness; thus, if $\Psi(\Xi,z)=(K,P,\gamma)$, then $\cM(z)\prec0$ if and only if $N(K,P,\gamma)\prec0$.
\end{proposition}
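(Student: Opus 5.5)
The plan is to build $\Phi$ explicitly through the classical Scherer / Gahinet--Apkarian change of variables, with the invertible block $P_{12}$ kept as the extra coordinate, and then to read off all the claims from one congruence identity.

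\textbf{Step 1: construct $\Phi$.} Given $(K,P,\gamma)\in\cS_{\mathrm{nd}}$, set $Y:=P_{11}$, $U:=P_{12}\in GL_{n_x}$, and $X:=(P^{-1})_{11}$. Define $V$ by
\[
V^\transpose:=U^{-1}(I-YX),
\]
so that $XY+\ldots$ is arranged so that $PP^{-1}=I$ holds blockwise. Invertibility of $V$ comes from $P\succ0$. Indeed, the Schur complement gives $Y\succ X^{-1}$, so $I-YX=-(Y-X^{-1})X$ is invertible. Then introduce
\[
\Pi_1=\begin{bmatrix}X&I\\V^\transpose&0\end{bmatrix},\qquad
\Pi_2=\begin{bmatrix}I&Y\\0&U^\transpose\end{bmatrix},
\]
which satisfy $P\Pi_1=\Pi_2$, and note that $\Pi_1$ is invertible because $V$ is. Define the lifted variables by the standard formulas
\begin{align*}
L&=D_K,\\
F&=C_KV^\transpose+D_KC_2X,\\
H&=UB_K+YB_2D_K,\\
M&=UA_KV^\transpose+UB_KC_2X+YB_2C_KV^\transpose+Y(A+B_2D_KC_2)X.
\end{align*}
Finally set $\Phi(K,P,\gamma):=(U,(X,Y,M,H,F,L,\gamma))$.

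\textbf{Step 2: the congruence identity.} Let $T:=\mathrm{diag}(\Pi_1,I,I)$. A direct block computation, using $P\Pi_1=\Pi_2$, gives
\[
T^\transpose N(K,P,\gamma)T=\cM(z),
\]
possibly up to the fixed block ordering and transposition conventions of \eqref{eq:M-def}. The coupling constraint also follows from the same identity: $\Pi_1^\transpose P\Pi_1=\Pi_1^\transpose\Pi_2=\begin{bmatrix}X&I\\I&Y\end{bmatrix}$, and this is $\succ0$ because $P\succ0$ and $\Pi_1$ is invertible. Since $T$ is invertible, congruence preserves both $\preceq0$ and $\prec0$. This shows $\Phi(\cS_{\mathrm{nd}})\subseteq GL_{n_x}\times\cF$, and it also gives the final claim $\cM(z)\prec0\iff N\prec0$.

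\textbf{Step 3: construct $\Psi$ and check bijectivity.} Given $(\Xi,z)\in GL_{n_x}\times\cF$, proceed as follows.
\begin{enumerate}
\item Set $U:=\Xi$ and $V^\transpose:=\Xi^{-1}(I-YX)$. This $V$ is invertible because $\begin{bmatrix}X&I\\I&Y\end{bmatrix}\succ0$ forces $X\succ0$ and $Y-X^{-1}\succ0$.
\item Recover $K$ by solving the linearizing equations in triangular order: $D_K=L$, then $C_K$ from $F$, then $B_K$ from $H$, then $A_K$ from $M$. Each step only inverts $U$ or $V$.
\item Define $P:=\Pi_2\Pi_1^{-1}$.
\end{enumerate}
The step I expect to need the most care is showing that this $P$ is symmetric, positive definite, and has $(1,1)$ block $Y$, $(1,2)$ block $\Xi$, and $(P^{-1})_{11}=X$. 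The route is to compute $\Pi_1^\transpose P\Pi_1=\Pi_1^\transpose\Pi_2=\begin{bmatrix}X&I\\I&Y\end{bmatrix}$. This matrix is symmetric, so $P$ is symmetric, and it is $\succ0$, so $P\succ0$. The block identities are then verified by explicit inversion of $\Pi_1$, whose form is simple because $\Pi_1$ is block triangular up to a permutation. Once these hold, the congruence of Step 2 gives $N\preceq0$, so $\Psi(\Xi,z)\in\cS_{\mathrm{nd}}$.

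\textbf{Step 4: inverse and analyticity.} Checking $\Phi\circ\Psi=\mathrm{id}$ and $\Psi\circ\Phi=\mathrm{id}$ is bookkeeping, because each construction recovers exactly the data the other started from. Both maps are composed of matrix products and inverses of matrices that stay invertible on the respective domains, namely $P$, $U$, $V$, and $\Pi_1$. They are therefore rational, hence real-analytic, which completes the diffeomorphism claim.
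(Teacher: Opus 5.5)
Your proposal is correct and is essentially the construction behind the result the paper imports from \citet[Proposition~2]{tang2023global}. The paper gives no proof beyond that citation and its remark about a definiteness-preserving congruence, and your $T=\operatorname{diag}(\Pi_1,I,I)$ with $P\Pi_1=\Pi_2$ is exactly that congruence; with your formulas for $(M,H,F,L)$, the identity $T^\transpose N(K,P,\gamma)T=\cM(z)$ holds verbatim in the block layout of \eqref{eq:M-def}, so the hedge about ordering conventions can be dropped, and one small precision is that strictness of $Y\succ X^{-1}$ in Step~1 uses $Y-X^{-1}=P_{12}P_{22}^{-1}P_{12}^\transpose$ together with $P_{12}\in GL_{n_x}$, not $P\succ0$ alone.
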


The result of \cite{tang2023global} on the globality of exact Clarke stationary points is qualitative rather than quantitative, since exact Clarke stationary points lack algorithmic tractability, and prior to our work, no connection between the subdifferentials and the suboptimality gap had been established. In this work, our goal is to design an algorithmic framework to find optimal solutions for the dynamic output-feedback $H_\infty$ policy search problem.

\section{Exact reconditioning and the identity gauge slice}\label{sec:reconditioning}
The first step is to show that the nondegenerate exact-certificate geometry can be normalized exactly by controller-state similarity.

\begin{lemma}[Similarity invariance]\label{lem:similarity}
For $S\in GL_{n_x}$ define
\begin{equation}\label{eq:T_S}
\mathcal T_S(K):=
\begin{bmatrix}
D_K & C_K S^{-1}\\
S B_K & S A_K S^{-1}
\end{bmatrix}.
\end{equation}
Then $\mathcal T_S(K)$ is realization-equivalent to $K$. Consequently,
\[
\mathcal T_S(K)\in\cC \iff K\in\cC,
\qquad
J\bigl(\mathcal T_S(K)\bigr)=J(K).
\]
\end{lemma}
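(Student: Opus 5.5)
The plan is to show directly that the closed-loop matrices of $\mathcal T_S(K)$ are obtained from those of $K$ by a state-space similarity, and then deduce both claims from it. Set the block-diagonal matrix
\[
T:=\begin{bmatrix} I & 0\\ 0 & S\end{bmatrix}\in GL_{2n_x},
\]
so that $T$ acts on the closed-loop state $(x,\xi)$ by $(x,\xi)\mapsto(x,S\xi)$. The key step is to substitute $D_K$, $C_KS^{-1}$, $SB_K$, $SA_KS^{-1}$ into the explicit formulas for $A_{\mathrm{cl}},B_{\mathrm{cl}},C_{\mathrm{cl}},D_{\mathrm{cl}}$. We then verify blockwise that
\[
A_{\mathrm{cl}}(\mathcal T_S(K))=T A_{\mathrm{cl}}(K)T^{-1},\qquad
B_{\mathrm{cl}}(\mathcal T_S(K))=T B_{\mathrm{cl}}(K),
\]
\[
C_{\mathrm{cl}}(\mathcal T_S(K))=C_{\mathrm{cl}}(K)T^{-1},\qquad
D_{\mathrm{cl}}(\mathcal T_S(K))=D_{\mathrm{cl}}(K).
\]
For instance, the $(1,2)$ block of $TA_{\mathrm{cl}}T^{-1}$ is $B_2C_KS^{-1}$, the $(2,1)$ block is $SB_KC_2$, and the $(2,2)$ block is $SA_KS^{-1}$. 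The $(1,1)$ block and $D_{\mathrm{cl}}$ are unchanged because $D_K$ is unchanged.

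Realization equivalence at the controller level follows in the same way. The controller transfer function $C_K(sI-A_K)^{-1}B_K+D_K$ is invariant under $(A_K,B_K,C_K)\mapsto(SA_KS^{-1},SB_K,C_KS^{-1})$, since
\[
C_KS^{-1}(sI-SA_KS^{-1})^{-1}SB_K=C_K(sI-A_K)^{-1}B_K.
\]
Hence $\mathcal T_S(K)$ and $K$ define the same input--output map $y\mapsto u$.

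The consequences are then immediate. Similar matrices share a spectrum, so $A_{\mathrm{cl}}(\mathcal T_S(K))$ is Hurwitz if and only if $A_{\mathrm{cl}}(K)$ is; this gives $\mathcal T_S(K)\in\cC\iff K\in\cC$. For $K\in\cC$, the same cancellation $T^{-1}(sI-TA_{\mathrm{cl}}T^{-1})^{-1}T=(sI-A_{\mathrm{cl}})^{-1}$ gives $T_{zw}(s;\mathcal T_S(K))=T_{zw}(s;K)$ for all $s$ off the spectrum. Taking $H_\infty$ norms yields $J(\mathcal T_S(K))=J(K)$.

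There is no real obstacle here. The only point requiring care is the bookkeeping in the blockwise verification of the closed-loop identities, in particular checking that $T^{-1}=\operatorname{diag}(I,S^{-1})$ multiplies on the correct side in each block.
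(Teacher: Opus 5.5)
Your proof is correct and rests on the same idea as the paper's, namely the controller-state change of coordinates $\xi\mapsto S\xi$. The paper argues at the controller level (same controller transfer matrix, unchanged plant, and internal stability preserved by state similarity), while you make the induced closed-loop similarity by $T=\operatorname{diag}(I,S)$ explicit. That explicit blockwise check is also a small bonus: the identities $A_{\mathrm{cl}}(\mathcal T_S(K))=TA_{\mathrm{cl}}(K)T^{-1}$, $B_{\mathrm{cl}}(\mathcal T_S(K))=TB_{\mathrm{cl}}(K)$, $C_{\mathrm{cl}}(\mathcal T_S(K))=C_{\mathrm{cl}}(K)T^{-1}$ are exactly what the paper later invokes in the proof of Lemma~\ref{lem:gauge-fix} without deriving them.
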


\begin{proof}
Let $\hat \xi = S\xi$. Under this change of controller coordinates, \eqref{eq:controller1}--\eqref{eq:controller2} become
\[
\dot{\hat \xi}=SA_KS^{-1}\hat \xi + SB_Ky,
\qquad
u = C_KS^{-1}\hat \xi + D_K y,
\]
which is precisely the controller $\mathcal T_S(K)$. Therefore $K$ and $\mathcal T_S(K)$ realize the same controller transfer matrix,
\[
D_K + C_K(sI-A_K)^{-1}B_K.
\]
Since the plant is unchanged, the closed-loop transfer matrix $T_{zw}(\cdot;K)$ is identical to $T_{zw}(\cdot;\mathcal T_S(K))$. Internal stability is preserved by state similarity, and so is the $H_\infty$ norm of the closed-loop transfer matrix. Hence $K\in\cC$ if and only if $\mathcal T_S(K)\in\cC$, and $J\bigl(\mathcal T_S(K)\bigr)=J(K)$.
\end{proof}

\begin{lemma}[Exact gauge fixing]\label{lem:gauge-fix}
Let $(K,P,\gamma)\in\cS_{\mathrm{nd}}$, with
\[
P=\begin{bmatrix}P_{11}&P_{12}\\P_{12}^\transpose&P_{22}\end{bmatrix},
\qquad P_{12}\in GL_{n_x}.
\]
Set $S:=P_{12}$ and define
\[
\widehat K:=\mathcal T_S(K),
\qquad
\widehat P:=T^{-\transpose}PT^{-1},
\qquad
T:=\operatorname{diag}(I_{n_x},S).
\]
Then $(\widehat K,\widehat P,\gamma)\in\cS_{\mathrm{nd}}$ and the off-diagonal block of $\widehat P$ equals the identity, i.e., $
\widehat P_{12}=I_{n_x}$. Moreover, if $\gamma=J(K)$ then $\gamma=J(\widehat K)$ as well.
\end{lemma}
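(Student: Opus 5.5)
The plan is to verify the claims by a direct congruence computation. The key observation is that state similarity on the controller coordinates acts on the closed-loop matrices by the block-diagonal similarity $T=\operatorname{diag}(I_{n_x},S)$. Concretely, writing $\widehat K=\mathcal T_S(K)$, a direct substitution into the formulas for the closed-loop matrices gives
\[
A_{\mathrm{cl}}(\widehat K)=TA_{\mathrm{cl}}(K)T^{-1},\qquad B_{\mathrm{cl}}(\widehat K)=TB_{\mathrm{cl}}(K),\\
C_{\mathrm{cl}}(\widehat K)=C_{\mathrm{cl}}(K)T^{-1},\qquad D_{\mathrm{cl}}(\widehat K)=D_{\mathrm{cl}}(K).
\]
The only check needed is blockwise. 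For example, the $(1,2)$ block of $TA_{\mathrm{cl}}T^{-1}$ is $B_2C_KS^{-1}$, the $(2,1)$ block is $SB_KC_2$, and the $(2,2)$ block is $SA_KS^{-1}$. This matches the definition \eqref{eq:T_S}.

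Next, with $\widehat P=T^{-\transpose}PT^{-1}$, I substitute into \eqref{eq:N-def}. The blocks transform as follows:
\[
A_{\mathrm{cl}}(\widehat K)^\transpose\widehat P+\widehat PA_{\mathrm{cl}}(\widehat K)=T^{-\transpose}\bigl(A_{\mathrm{cl}}(K)^\transpose P+PA_{\mathrm{cl}}(K)\bigr)T^{-1},\\
\widehat PB_{\mathrm{cl}}(\widehat K)=T^{-\transpose}PB_{\mathrm{cl}}(K),\qquad C_{\mathrm{cl}}(\widehat K)^\transpose=T^{-\transpose}C_{\mathrm{cl}}(K)^\transpose.
\]
Hence
\[
N(\widehat K,\widehat P,\gamma)=\mathcal D^\transpose N(K,P,\gamma)\mathcal D,\qquad \mathcal D:=\operatorname{diag}(T^{-1},I,I).
\]
This is a congruence by an invertible matrix, so $N(\widehat K,\widehat P,\gamma)\preceq0$. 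Likewise $\widehat P\succ0$, being congruent to $P\succ0$.

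It remains to compute the off-diagonal block. Since $T^{-1}=\operatorname{diag}(I,S^{-1})$,
\[
\widehat P=\begin{bmatrix}P_{11}&P_{12}S^{-1}\\S^{-\transpose}P_{12}^\transpose&S^{-\transpose}P_{22}S^{-1}\end{bmatrix},
\]
so $\widehat P_{12}=P_{12}S^{-1}=I_{n_x}$ when $S=P_{12}$. The identity is invertible, so all the membership conditions of \eqref{eq:Snd-def} hold, and $(\widehat K,\widehat P,\gamma)\in\cS_{\mathrm{nd}}$.

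The final claim needs the value to be defined. If $\gamma=J(K)$, then $K\in\cC$ because $J$ is only defined on $\cC$. Lemma~\ref{lem:similarity} then gives $\widehat K\in\cC$ and $J(\widehat K)=J(K)=\gamma$.

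The main obstacle is simply keeping the convention straight. With $T=\operatorname{diag}(I,S)$, the closed-loop state transforms as $x_{\mathrm{cl}}\mapsto Tx_{\mathrm{cl}}$, and the Lyapunov matrix then transforms as $P\mapsto T^{-\transpose}PT^{-1}$. Once that is checked, the remaining steps are routine.
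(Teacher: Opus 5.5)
Your proof is correct and follows the paper's argument essentially step for step. Both compute $\widehat P$ blockwise, use the closed-loop similarity relations under $T=\operatorname{diag}(I_{n_x},S)$, derive the congruence $N(\widehat K,\widehat P,\gamma)=\operatorname{diag}(T^{-\transpose},I,I)\,N(K,P,\gamma)\,\operatorname{diag}(T^{-1},I,I)$, and invoke Lemma~\ref{lem:similarity} for the value claim; your explicit blockwise check of the closed-loop relations and your remark that $\gamma=J(K)$ presupposes $K\in\cC$ are minor gains in rigor over the paper's version.
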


\begin{proof}
By construction, $T$ is invertible. Because $P\succ 0$, we have $\widehat P=T^{-\transpose}PT^{-1}\succ 0$. A direct block computation gives
\[
\widehat P
=
\begin{bmatrix}
P_{11} & P_{12}S^{-1}\\
S^{-\transpose}P_{12}^\transpose & S^{-\transpose}P_{22}S^{-1}
\end{bmatrix}
=
\begin{bmatrix}
P_{11} & I\\
I & S^{-\transpose}P_{22}S^{-1}
\end{bmatrix},
\]
so indeed $\widehat P_{12}=I$. Next, since $\widehat K=\mathcal T_S(K)$, Lemma~\ref{lem:similarity} implies that the closed-loop matrices transform by controller-state similarity: $A_{\mathrm{cl}}(\widehat K)=T A_{\mathrm{cl}}(K)T^{-1},\; B_{\mathrm{cl}}(\widehat K)=T B_{\mathrm{cl}}(K),\; C_{\mathrm{cl}}(\widehat K)= C_{\mathrm{cl}}(K)T^{-1}$, and $D_{\mathrm{cl}}(\widehat K)=D_{\mathrm{cl}}(K)$. Therefore
\[
N(\widehat K,\widehat P,\gamma)
=
\operatorname{diag}(T^{-\transpose},I,I)
\,N(K,P,\gamma)\,
\operatorname{diag}(T^{-1},I,I).
\]
Since $N(K,P,\gamma)\preceq 0$ and congruence by an invertible matrix preserves semidefiniteness, we obtain $N(\widehat K,\widehat P,\gamma)\preceq 0$. Thus $(\widehat K,\widehat P,\gamma)\in\cS_{\mathrm{nd}}$. Finally, Lemma~\ref{lem:similarity} yields $J(\widehat K)=J(K)$. Hence if $\gamma=J(K)$, then also $\gamma=J(\widehat K)$.
\end{proof}

Lemma~\ref{lem:gauge-fix} shows that every exact nondegenerate controller admits a realization-equivalent representative whose certificate satisfies $P_{12}=I$. This motivates the following slice as a representative of the set $\mathcal{C}_{nd}$.

\begin{definition}[Identity gauge slice]
Define
\[
\cS_I:=\{(K,P,\gamma)\in\cS_{\mathrm{nd}}: P_{12}=I_{n_x}\}.
\]
Let $
\Psi_I:\cF\to\cS_I$ defined as $\Psi_I(z):=\Psi(I_{n_x},z)$, and write $
\Psi_I(z)=\bigl(K_I(z),P_I(z),\gamma(z)\bigr)$,
where $\gamma(z)$ denotes the last coordinate of $z$.
\end{definition}

\begin{proposition}[Slice chart]\label{prop:slice-chart}
The map $\Psi_I$ is a real-analytic diffeomorphism from $\cF$ onto $\cS_I$. Define
\[
\cF_{\mathrm{st}}:=\{z\in\cF:K_I(z)\in\cC\},
\qquad
\cS_I^{\mathrm{st}}:=\cS_I\cap\cS_{\mathrm{nd}}^{\mathrm{st}}.
\]
Then $\cF_{\mathrm{st}}$ is relatively open in $\cF$, and $\Psi_I$ restricts to a real-analytic bijection, with real-analytic inverse, between $\cF_{\mathrm{st}}$ and $\cS_I^{\mathrm{st}}$. Moreover, for every exact nondegenerate controller $K\in\cC_{\mathrm{nd}}$ there exists $z\in\cF_{\mathrm{st}}$ such that $K_I(z)$ is realization-equivalent to $K$ and
\[
\Psi_I(z)=\bigl(K_I(z),P_I(z),J(K)\bigr)\in\cS_I^{\mathrm{st}}.
\]
\end{proposition}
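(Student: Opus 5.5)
The plan is to derive all three claims from Proposition~\ref{prop:TZ-ECL}, viewing $\Psi_I$ as the restriction of $\Psi$ to the slice $\{I_{n_x}\}\times\cF$. Since $\Psi$ is a real-analytic diffeomorphism $GL_{n_x}\times\cF\to\cS_{\mathrm{nd}}$, the map $z\mapsto(I_{n_x},z)$ is an analytic embedding, so $\Psi_I$ is real-analytic and injective. Its image is exactly $\cS_I$. Indeed, the $(1,2)$ block of $P$ in $\Psi(I,z)$ equals $I$ by Proposition~\ref{prop:TZ-ECL}, so $\Psi_I(\cF)\subseteq\cS_I$. Conversely, for $(K,P,\gamma)\in\cS_I$ write $\Phi(K,P,\gamma)=(P_{12},z)=(I,z)$; then $(K,P,\gamma)=\Psi_I(z)$. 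The inverse is $\pi_2\circ\Phi|_{\cS_I}$, where $\pi_2$ is projection onto the $\cF$ factor. This is the restriction of an analytic map to $\cS_I$, and $\cS_I$ is the preimage under $\Phi$ of the slice $\{I\}\times\cF$, so it is an analytic submanifold-with-the-same-structure as $\cF$. Hence $\Psi_I$ is a diffeomorphism $\cF\to\cS_I$.

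For relative openness of $\cF_{\mathrm{st}}$, note that $K_I=\pi_K\circ\Psi_I$ is continuous on $\cF$. The set $\cC$ is open in controller space, because the Hurwitz property of $A_{\mathrm{cl}}(K)$ is an open condition and $A_{\mathrm{cl}}$ is affine in $K$. Therefore $\cF_{\mathrm{st}}=K_I^{-1}(\cC)$ is relatively open in $\cF$. By definition, $\Psi_I(z)\in\cS_I^{\mathrm{st}}$ if and only if $z\in\cF$ and $K_I(z)\in\cC$, that is, if and only if $z\in\cF_{\mathrm{st}}$. So the bijection $\Psi_I:\cF\to\cS_I$ restricts to a bijection $\cF_{\mathrm{st}}\to\cS_I^{\mathrm{st}}$, and the analytic inverse restricts along with it.

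For the last claim, take $K\in\cC_{\mathrm{nd}}$ and a certificate $P$ with $(K,P,J(K))\in\cS_{\mathrm{nd}}$. Apply Lemma~\ref{lem:gauge-fix} with $S=P_{12}$. This gives $(\widehat K,\widehat P,J(K))\in\cS_{\mathrm{nd}}$ with $\widehat P_{12}=I$, so the triple lies in $\cS_I$. By Lemma~\ref{lem:similarity}, $\widehat K$ is realization-equivalent to $K$, $\widehat K\in\cC$, and $J(\widehat K)=J(K)$. Thus $(\widehat K,\widehat P,J(K))\in\cS_I^{\mathrm{st}}$. Set $z:=\pi_2\Phi(\widehat K,\widehat P,J(K))$, which lies in $\cF_{\mathrm{st}}$ by the previous step. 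Then $\Psi_I(z)=(\widehat K,\widehat P,J(K))$, so $K_I(z)=\widehat K$ is realization-equivalent to $K$, and the last coordinate of $\Psi_I(z)$ equals $J(K)$, matching the convention that $\gamma(z)$ is read off from $z$.

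The main obstacle is the regularity language. The set $\cF$ contains the nonstrict constraint $\cM(z)\preceq0$, so it is not an open set, and "real-analytic diffeomorphism" must be read in the same sense as in Proposition~\ref{prop:TZ-ECL}: restrictions of analytic maps defined on neighborhoods, or equivalently given by explicit analytic formulas. I would handle this by noting that $\Psi$ and $\Phi$ are given by explicit rational formulas in $(\Xi,z)$ and in $(K,P,\gamma)$ on open ambient sets. Fixing $\Xi=I$ in these formulas preserves analyticity, so no new analytic argument is needed beyond composing those formulas with the affine inclusion and the projection $\pi_2$.
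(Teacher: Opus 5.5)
Your proposal is correct and follows the paper's proof essentially step for step. Both restrict $\Psi$ to $\{I_{n_x}\}\times\cF$, use that the first coordinate of $\Phi$ is $P_{12}$ to get both $\Psi_I(\cF)\subseteq\cS_I$ and surjectivity, write $\cF_{\mathrm{st}}=K_I^{-1}(\cC)$ for relative openness, and combine Lemma~\ref{lem:gauge-fix} with Lemma~\ref{lem:similarity} for controllers in $\cC_{\mathrm{nd}}$. Your explicit inverse $\pi_2\circ\Phi|_{\cS_I}$ and your remark on reading analyticity on the non-open set $\cF$ are slightly more careful than the paper, which simply calls $\{I_{n_x}\}\times\cF$ an embedded submanifold.
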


\begin{proof}
Because $\Psi$ is a real-analytic diffeomorphism on $GL_{n_x}\times\cF$, its restriction to the embedded submanifold $\{I_{n_x}\}\times \cF$ is a real-analytic diffeomorphism onto its image. By Proposition~\ref{prop:TZ-ECL}, the first coordinate of $\Phi(K,P,\gamma)$ equals $P_{12}$. Hence $\Psi(I_{n_x},z)$ has $P_{12}=I_{n_x}$ for every $z\in\cF$, so the image is contained in $\cS_I$.

Conversely, if $(K,P,\gamma)\in\cS_I$, then $\Phi(K,P,\gamma)=(I_{n_x},z)$ for a unique $z\in\cF$, again because the first coordinate of $\Phi$ is $P_{12}$. Therefore $(K,P,\gamma)=\Psi(I_{n_x},z)=\Psi_I(z)$, proving surjectivity onto $\cS_I$. Since $\Psi_I$ is the restriction of a diffeomorphism, it is itself a real-analytic diffeomorphism. Since the Hurwitz set $\cC$ is open and $K_I$ is continuous, $\cF_{\mathrm{st}}=K_I^{-1}(\cC)$ is relatively open in $\cF$. The stated restricted bijection and its analytic inverse follow directly from the definitions.

Now let $K\in\cC_{\mathrm{nd}}$. By definition there exists $P\succ 0$ with $(K,P,J(K))\in\cS_{\mathrm{nd}}$. Applying Lemma~\ref{lem:gauge-fix}, we obtain a realization-equivalent controller $\widehat K$ and a matrix $\widehat P$ such that $(\widehat K,\widehat P,J(K))\in\cS_I$. Since $\Psi_I$ parameterizes $\cS_I$, there exists $z\in\cF$ with $
\Psi_I(z)=\bigl(\widehat K,\widehat P,J(K)\bigr)$.
Because $\widehat K$ is realization-equivalent to $K$, Lemma~\ref{lem:similarity} gives $\widehat K\in\cC$ and $J(\widehat K)=J(K)$. Hence $z\in\cF_{\mathrm{st}}$ and the displayed certificate belongs to $\cS_I^{\mathrm{st}}$.
\end{proof}

The identity gauge slice contains both exact and nonexact certificates. We isolate the exact part.

\begin{definition}[Exact slice set]
Define
\[
\cE_I:=\{z\in\cF_{\mathrm{st}}: \gamma(z)=J(K_I(z))\}.
\]
For a subset $\cZ\subset \cF_{\mathrm{st}}$, write $\cE_I(\cZ):=\cE_I\cap \cZ$.
\end{definition}

\begin{lemma}\label{lem:closed-exact}
The set $\cE_I$ is relatively closed in $\cF_{\mathrm{st}}$. Consequently, if $\cZ\subset\cF_{\mathrm{st}}$ is compact, then $\cE_I(\cZ)$ is compact.
\end{lemma}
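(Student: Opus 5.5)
The plan is to write $\cE_I$ as the zero set of a continuous function on $\cF_{\mathrm{st}}$. Define $g:\cF_{\mathrm{st}}\to\R$ by
\[
g(z):=\gamma(z)-J\bigl(K_I(z)\bigr).
\]
By Proposition~\ref{prop:slice-chart}, $\Psi_I$ is real-analytic on $\cF$. Hence the controller component $K_I$ is continuous on $\cF$, and so also on $\cF_{\mathrm{st}}$, and by definition it maps $\cF_{\mathrm{st}}$ into $\cC$. By Proposition~\ref{prop:TZ-regular}, $J$ is locally Lipschitz, and in particular continuous, on the open set $\cC$. The composition $J\circ K_I$ is therefore continuous on $\cF_{\mathrm{st}}$. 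The coordinate projection $z\mapsto\gamma(z)$ is linear, hence continuous. So $g$ is continuous on $\cF_{\mathrm{st}}$ with the subspace topology, and $\cE_I=g^{-1}(\{0\})$ is relatively closed in $\cF_{\mathrm{st}}$.

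It is worth noting, though not needed, that every $z\in\cF_{\mathrm{st}}$ satisfies $g(z)\ge 0$. Indeed, $\Psi_I(z)\in\cS_{\mathrm{nd}}^{\mathrm{st}}$, so the nonstrict bounded-real lemma gives $J(K_I(z))\le\gamma(z)$. Thus $\cE_I$ can also be written as $\{g\le 0\}$, which makes closedness equally clear.

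For the compactness claim, let $\cZ\subset\cF_{\mathrm{st}}$ be compact. Then $\cE_I(\cZ)=\cE_I\cap\cZ=\{z\in\cZ:g(z)=0\}$, which is the preimage of the closed set $\{0\}$ under the continuous restriction $g|_{\cZ}$. It is therefore closed in $\cZ$, and a closed subset of a compact set is compact. Alternatively, one can use sequences: if $z_k\in\cE_I(\cZ)$ and $z_k\to z$, then $z\in\cZ\subset\cF_{\mathrm{st}}$ because $\cZ$ is compact. Continuity of $g$ at $z$, which uses $K_I(z)\in\cC$, then gives $g(z)=0$.

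The only delicate point is that $J$ is defined and continuous only on the open set $\cC$, not on its closure. Relative closedness therefore holds in $\cF_{\mathrm{st}}$ and not in $\cF$: a limit of exact points could leave $\cF_{\mathrm{st}}$, and there $J$ may blow up or fail to be defined. The compactness argument avoids this because the hypothesis $\cZ\subset\cF_{\mathrm{st}}$ keeps all limit points inside the stabilizing region, where $g$ is continuous. This is the step to check carefully; beyond it the proof is routine topology.
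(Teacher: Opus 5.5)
Your proof is correct and is the paper's argument: both write $\cE_I$ as the zero set of the continuous function $z\mapsto\gamma(z)-J(K_I(z))$ on $\cF_{\mathrm{st}}$, and both get compactness of $\cE_I(\cZ)$ as a closed subset of the compact set $\cZ$. Your side remark that this function is nonnegative (by the nonstrict bounded-real lemma) is also correct, though, as you say, it is not needed.
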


\begin{proof}
By Proposition~\ref{prop:slice-chart}, $K_I$ is continuous on $\cF_{\mathrm{st}}$. Since $J$ is locally Lipschitz on $\cC$, it is continuous on $\cC$. The map $z\mapsto \gamma(z)$ is continuous because it is just the last coordinate projection. Therefore
\[
h(z):=\gamma(z)-J(K_I(z))
\]
is continuous on $\cF_{\mathrm{st}}$, and $\cE_I=h^{-1}(\{0\})$ is relatively closed there. If $\cZ\subset\cF_{\mathrm{st}}$ is compact, then $\cE_I(\cZ)$ is closed in $\cZ$ and hence compact.
\end{proof}

\section{Quantitative first-order error bounds on the exact slice}
\label{sec:quantitative}

Now, we present our main result, which connects the suboptimality of an exact slice member to its set of Clarke subdifferentials, thereby providing a backbone for the analysis of nonsmooth optimization algorithms.

\begin{theorem}[Suboptimality Gap]\label{thm:pairwise}
Let $z,z^\star\in\cE_I$, and
\[
\Psi_I(z)=\bigl(K,P,J(K)\bigr),
\qquad
\Psi_I(z^\star)=\bigl(K^\star,P^\star,J(K^\star)\bigr).
\]
Then
\begin{equation}\label{eq:pairwise-exact}
J(K)-J(K^\star)
\le
\distop\!\bigl(0,\partial J(K)\bigr)
\cdot
\bigl\|D K_I(z)[z^\star-z]\bigr\|.
\end{equation}
In particular,
\begin{equation*}
J(K)-J(K^\star)
\le
\opnorm{D K_I(z)}\,\|z^\star-z\|\,\distop\!\bigl(0,\partial J(K)\bigr).
\end{equation*}
\end{theorem}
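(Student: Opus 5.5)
Our plan is to exploit the convexity of the lifted set $\cF$ together with the bounded-real upper bound along a segment. Set $z_t:=z+t(z^\star-z)$ for $t\in[0,1]$. Because $\cF$ is convex, $z_t\in\cF$ for all $t$. Because $\cF_{\mathrm{st}}$ is relatively open in $\cF$ (Proposition~\ref{prop:slice-chart}), $z_t\in\cF_{\mathrm{st}}$ for all sufficiently small $t\ge 0$. Hence $K_I(z_t)\in\cC$.

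By Proposition~\ref{prop:TZ-ECL}, $\Psi_I(z_t)\in\cS_{\mathrm{nd}}$, and since $K_I(z_t)\in\cC$ it lies in $\cS_{\mathrm{nd}}^{\mathrm{st}}$. The nonstrict bounded-real lemma recalled after \eqref{eq:Snd-def} then gives
\[
J(K_I(z_t))\le\gamma(z_t)=(1-t)J(K)+tJ(K^\star),
\]
where we used exactness of both endpoints, $\gamma(z)=J(K)$ and $\gamma(z^\star)=J(K^\star)$, and the fact that $\gamma$ is a linear coordinate. Subtracting $J(K)=J(K_I(z_0))$, dividing by $t>0$, and letting $t\downarrow0$ yields
\[
\liminf_{t\downarrow 0}\frac{J(K_I(z_t))-J(K)}{t}\le J(K^\star)-J(K).
\]

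Next we identify the left-hand side with a directional derivative. Since $K_I$ is real-analytic, $K_I(z_t)=K+tV+o(t)$ with $V:=DK_I(z)[z^\star-z]$. Since $J$ is locally Lipschitz near $K\in\cC$, the $o(t)$ term contributes $o(t)$ to $J$. By Proposition~\ref{prop:TZ-regular}, $J'(K;V)$ exists, so the limit equals $J'(K;V)=\max_{G\in\partial J(K)}\inner{G}{V}$. This gives
\[
\max_{G\in\partial J(K)}\inner{G}{V}\le J(K^\star)-J(K),
\]
that is, $J(K)-J(K^\star)\le\min_{G\in\partial J(K)}\inner{-G}{V}$. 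Pick $G_0\in\partial J(K)$ attaining $\distop(0,\partial J(K))$; it exists because the Clarke subdifferential is compact and convex. Then
\[
\inner{-G_0}{V}\le\|G_0\|\,\|V\|=\distop(0,\partial J(K))\,\|V\|,
\]
which is exactly \eqref{eq:pairwise-exact}. The second inequality follows from $\|V\|\le\opnorm{DK_I(z)}\|z^\star-z\|$.

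The main obstacle is the domain issue: $DK_I$ and the one-sided derivative must make sense on $\cF$, which may be a set with boundary, since $\cM(z)\preceq0$ is nonstrict. We plan to resolve this in two ways. First, $\Psi$ is a diffeomorphism in the sense of Proposition~\ref{prop:TZ-ECL}, so $K_I$ extends analytically to a neighborhood, or at least has a well-defined derivative along feasible directions. Second, the argument only uses the one-sided segment into the convex set $\cF$. We also need $z_t\in\cF_{\mathrm{st}}$ for small $t$, which is exactly the relative openness already established. Everything else is a direct consequence of convexity and regularity.
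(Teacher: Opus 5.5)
Your proposal is correct and follows the paper's proof essentially step for step. The steps are the same: the segment $z_t$ in the convex lift, relative openness of $\cF_{\mathrm{st}}$, the nonstrict bounded-real bound $J(K_I(z_t))\le\gamma(z_t)$ with $\gamma$ affine and exact at both endpoints, transfer to $J'(K;V)$ via the first-order expansion of $K_I$ plus local Lipschitzness and regularity, and finally the minimal-norm element of $\partial J(K)$. Using $\liminf$ where the paper uses $\limsup$ is harmless, since your bound holds for every small $t>0$ and the limit exists by regularity. Your remark about making sense of $DK_I$ on the possibly non-open set $\cF$ addresses a point the paper simply takes for granted when it calls $K_I$ real analytic on $\cF$.
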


\vspace{-0.1cm}
\begin{proof}
Fix $z,z^\star\in\cE_I$ and define the segment
\[
z_t:=z+t(z^\star-z),\qquad t\in[0,1].
\]
Since $\cF$ is convex, $z_t\in\cF$ for every $t\in[0,1]$. Let $
\Psi_I(z_t)=\bigl(K_t,P_t,\gamma_t\bigr). $
Since $z\in\cE_I\subset\cF_{\mathrm{st}}$ and $\cF_{\mathrm{st}}$ is relatively open in $\cF$, there exists $t_0\in(0,1]$ such that $z_t\in\cF_{\mathrm{st}}$ for every $t\in[0,t_0]$. For such $t$, $K_t\in\cC$ and $\Psi_I(z_t)\in\cS_I\subset\cS_{\mathrm{nd}}$, so the nonstrict bounded-real lemma yields
\begin{equation}\label{eq:bounded-real-path}
J(K_t)\le \gamma_t,\qquad \forall t\in[0,t_0].
\end{equation}
Moreover, the $\gamma$-coordinate is affine along the segment:
\[
\gamma_t=(1-t)\gamma(z)+t\gamma(z^\star).
\]
Since $z,z^\star\in\cE_I$, exactness gives $
\gamma(z)=J(K), \gamma(z^\star)=J(K^\star).$ Combining this with \eqref{eq:bounded-real-path}, we obtain
\begin{align}
\limsup_{t\downarrow 0}\frac{J(K_t)-J(K)}{t}
&\le \lim_{t\downarrow 0}\frac{\gamma_t-\gamma(z)}{t} \notag\\
&= \gamma(z^\star)-\gamma(z) \notag\\
&= J(K^\star)-J(K). \label{eq:curve-upper}
\end{align}

Next, since $K_I$ is real analytic on $\cF$, it is differentiable at $z$ and $
K_t=K + tV + r(t)$,
with
\[
V:=D K_I(z)[z^\star-z], \quad \frac{\|r(t)\|}{t}\to 0\quad(t\downarrow 0).
\]
Because $J$ is locally Lipschitz at $K$, there exist $L>0$ and a neighborhood of $K$ on which
\[
|J(K_1)-J(K_2)|\le L\|K_1-K_2\|.
\]
For all sufficiently small $t>0$ we therefore have
\begin{align*}
\hspace{-0.35cm}
\frac{J(K+tV)-J(K)}{t}
&\le \frac{J(K_t)-J(K)}{t} + \frac{|J(K+tV)-J(K_t)|}{t}\\
&\le \frac{J(K_t)-J(K)}{t} + L\frac{\|K+tV-K_t\|}{t}\\
&= \frac{J(K_t)-J(K)}{t} + L\frac{\|r(t)\|}{t}.
\end{align*}
Taking the limit superior as $t\downarrow 0$ and using \eqref{eq:curve-upper} yields
\begin{equation}\label{eq:dir-upper}
\hspace{-0.3cm}
J'(K;V)
=\lim_{t\downarrow 0}\frac{J(K+tV)-J(K)}{t}
\le J(K^\star)-J(K),
\end{equation}
where the existence of the directional derivative follows from Proposition~\ref{prop:TZ-regular}. Again, by Proposition~\ref{prop:TZ-regular},
\[
J'(K;V)=\max_{G\in\partial J(K)}\inner{G}{V}.
\]
The Clarke subdifferential of a locally Lipschitz function is nonempty, compact, and convex, so there exists $G_{\min}\in\partial J(K)$ with $
\|G_{\min}\|=\distop\!\bigl(0,\partial J(K)\bigr)$. Hence
\[
J'(K;V) 
& \ge \inner{G_{\min}}{V} \\
& \ge -\|G_{\min}\|\,\|V\| \\
& = -\distop\!\bigl(0,\partial J(K)\bigr)\,\|V\|. \numberthis{eq:dir-lower}
\]
Combining \eqref{eq:dir-upper} and \eqref{eq:dir-lower},
\[
J(K)-J(K^\star)
\le \distop\!\bigl(0,\partial J(K)\bigr)\,\|V\|,
\]
which is \eqref{eq:pairwise-exact}. Finally,
\[
\|V\|=\bigl\|D K_I(z)[z^\star-z]\bigr\| \le \opnorm{D K_I(z)}\,\|z^\star-z\|,
\]
which completes the proof.
\end{proof}

Theorem~\ref{thm:pairwise} is the quantitative refinement of the landscape argument in \citet{tang2023global} on the reconditioned exact slice. It becomes especially useful on compact convex slices.

\begin{corollary}\label{cor:compact-slice}
Let $\cZ\subset\cF_{\mathrm{st}}$ be compact and convex, and assume that $\cE_I(\cZ)\neq\varnothing$. Define
\[
J^{\mathrm{ex}}_{\cZ}:=\min_{z\in\cE_I(\cZ)} J\bigl(K_I(z)\bigr),
\]
which is well defined by Lemma~\ref{lem:closed-exact}. Also define
\[
L_{\cZ}:=\sup_{z\in\cZ}\opnorm{D K_I(z)}, \;
D_{\cZ}:=\diamop(\cZ), \;
C_{\cZ}:=L_{\cZ}D_{\cZ}.
\]
Then for every $z\in\cE_I(\cZ)$,
\begin{equation}\label{eq:compact-slice-bound}
J\bigl(K_I(z)\bigr)-J^{\mathrm{ex}}_{\cZ}
\le
C_{\cZ}\,\distop\!\bigl(0,\partial J(K_I(z))\bigr).
\end{equation}
\end{corollary}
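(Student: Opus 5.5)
The plan is to reduce the corollary to Theorem~\ref{thm:pairwise} by choosing the comparison point $z^\star$ to be a minimizer of the exact-slice value over $\cZ$. The argument has three steps.

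\emph{Step 1: the minimum is attained.} By Lemma~\ref{lem:closed-exact}, $\cE_I(\cZ)$ is compact, and by assumption it is nonempty. The map $z\mapsto J(K_I(z))$ is continuous on $\cF_{\mathrm{st}}$, being the composition of the continuous map $K_I$ with $J$, which is locally Lipschitz on $\cC$. Hence the minimum defining $J^{\mathrm{ex}}_{\cZ}$ is attained at some $z^\star\in\cE_I(\cZ)$. Write $\Psi_I(z^\star)=(K^\star,P^\star,J(K^\star))$, so that $J(K^\star)=J^{\mathrm{ex}}_{\cZ}$.

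\emph{Step 2: apply the pairwise bound.} Fix any $z\in\cE_I(\cZ)$ and set $K=K_I(z)$. Both $z$ and $z^\star$ lie in $\cE_I$, so the second inequality of Theorem~\ref{thm:pairwise} gives
\[
J(K)-J^{\mathrm{ex}}_{\cZ}\le \opnorm{D K_I(z)}\,\|z^\star-z\|\,\distop\bigl(0,\partial J(K)\bigr).
\]

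\emph{Step 3: bound the two geometric factors uniformly.} Since $z\in\cZ$, we have $\opnorm{DK_I(z)}\le L_{\cZ}$ by definition. Since $z,z^\star\in\cZ$, we have $\|z^\star-z\|\le D_{\cZ}$. Substituting both bounds yields \eqref{eq:compact-slice-bound} with constant $C_{\cZ}=L_{\cZ}D_{\cZ}$.

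The only step needing care is the finiteness of $L_{\cZ}$; otherwise the bound is vacuous, though still formally true. Finiteness holds because $K_I$ is real-analytic on $\cF$, so $DK_I$ is continuous, and a continuous function attains a finite supremum on the compact set $\cZ$. $D_{\cZ}$ is finite simply because $\cZ$ is compact.

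Convexity of $\cZ$ is not actually needed here. Theorem~\ref{thm:pairwise} already uses the segment in the convex set $\cF$, and only for $t$ near $0$, where relative openness of $\cF_{\mathrm{st}}$ applies. So there is no real obstacle: the corollary is a direct instantiation of the theorem at a compactness-guaranteed minimizer.
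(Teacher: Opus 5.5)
Your proposal is correct and matches the paper's proof step for step. You attain $J^{\mathrm{ex}}_{\cZ}$ at some $z^\star\in\cE_I(\cZ)$ using compactness from Lemma~\ref{lem:closed-exact}, apply Theorem~\ref{thm:pairwise} to the pair $(z,z^\star)$, and then bound $\opnorm{DK_I(z)}\le L_{\cZ}$ and $\|z-z^\star\|\le D_{\cZ}$. Your side remarks are also accurate: $L_{\cZ}$ is finite because $DK_I$ is continuous on the compact set $\cZ$, and the paper's argument likewise never uses convexity of $\cZ$.
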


\begin{proof}
By Lemma~\ref{lem:closed-exact}, the set $\cE_I(\cZ)$ is compact. Since $z\mapsto J(K_I(z))$ is continuous on $\cE_I(\cZ)$, there exists $z^\star\in\cE_I(\cZ)$ such that
\[
J\bigl(K_I(z^\star)\bigr)=J^{\mathrm{ex}}_{\cZ}.
\]
Applying Theorem~\ref{thm:pairwise} to $z$ and $z^\star$ gives
\[
J\bigl(K_I(z)\bigr)-J^{\mathrm{ex}}_{\cZ}
\le
\opnorm{D K_I(z)}\,\|z-z^\star\|\,\distop\!\bigl(0,\partial J(K_I(z))\bigr).
\]
Since $z,z^\star\in\cZ$, we have $\|z-z^\star\|\le D_{\cZ}$ and $\opnorm{D K_I(z)}\le L_{\cZ}$. Substituting these bounds yields \eqref{eq:compact-slice-bound} and proof is concluded.
\end{proof}

This result can serve as a core for the analysis of nonsmooth optimization algorithms for output-feedback $H_\infty$ policy search. If $\cE_I(\cZ)$ contains a globally optimal controller for \eqref{eq:policyopt}, then $J^{\mathrm{ex}}_{\cZ}=J^*=\inf_{K\in\cC}J(K)$, and every Clarke-stationary $z\in\cE_I(\cZ)$ is globally optimal. In addition, as we will show in the next section, the convex lift $\gamma(z)$ attains values arbitrarily close to $J^*$. As a result, we can establish a similar connection between global suboptimality and the Clarke subdifferential. We do not include this result in the interest of space.

In turn, we discuss algorithmic consequence of the value gap bounds.

\begin{corollary}\label{cor:algorithmic}
Let $\cZ\subset\cF_{\mathrm{st}}$ be compact and convex with $\cE_I(\cZ)\neq\varnothing$, and let $\{z_k\}_{k\ge 0}\subset \cE_I(\cZ)$ be any sequence. Define the stationarity residual
\[
\varepsilon_k:=\distop\!\bigl(0,\partial J(K_I(z_k))\bigr).
\]
Then
\begin{equation}\label{eq:alg-gap}
J\bigl(K_I(z_k)\bigr)-J^{\mathrm{ex}}_{\cZ}
\le C_{\cZ}\,\varepsilon_k,
\qquad \forall k\ge 0.
\end{equation}
In particular, if $\varepsilon_k\to 0$, then
\[
J\bigl(K_I(z_k)\bigr)\to J^{\mathrm{ex}}_{\cZ}.
\]
If $\cE_I(\cZ)$ contains a globally optimal controller, then \eqref{eq:alg-gap} is a global optimality-gap estimate.
\end{corollary}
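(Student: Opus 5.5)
The bound \eqref{eq:alg-gap} is a pointwise consequence of Corollary~\ref{cor:compact-slice}, so the plan is to invoke that corollary for each iterate separately. The sequence plays no structural role.

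Fix $k\ge 0$. Since $z_k\in\cE_I(\cZ)$ and $\cZ$ is compact and convex with $\cE_I(\cZ)\neq\varnothing$, the hypotheses of Corollary~\ref{cor:compact-slice} hold verbatim. Applying \eqref{eq:compact-slice-bound} with $z=z_k$ gives
\[
J\bigl(K_I(z_k)\bigr)-J^{\mathrm{ex}}_{\cZ}\le C_{\cZ}\,\distop\bigl(0,\partial J(K_I(z_k))\bigr)=C_{\cZ}\,\varepsilon_k .
\]
This is \eqref{eq:alg-gap}.

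We should also record that $C_{\cZ}<\infty$, since otherwise the bound is vacuous. The constant $D_{\cZ}$ is finite because $\cZ$ is compact. For $L_{\cZ}$, note that $K_I$ is real analytic, hence $C^1$, on a neighborhood of $\cZ$ within the chart, so $z\mapsto\opnorm{DK_I(z)}$ is continuous and bounded on the compact set $\cZ$. This is the only point that needs any care: the derivative must be understood as the derivative of the restriction of the diffeomorphism $\Psi_I$ on $\cF$, which is how it was already used in Theorem~\ref{thm:pairwise}.

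For the convergence claim, every $z_k$ lies in $\cE_I(\cZ)$ and $J^{\mathrm{ex}}_{\cZ}$ is a minimum over that set, so $0\le J(K_I(z_k))-J^{\mathrm{ex}}_{\cZ}\le C_{\cZ}\varepsilon_k$. If $\varepsilon_k\to0$, squeezing gives $J(K_I(z_k))\to J^{\mathrm{ex}}_{\cZ}$.

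For the final claim, suppose $\cE_I(\cZ)$ contains some $z^\circ$ with $J(K_I(z^\circ))=J^*$. Then $J^{\mathrm{ex}}_{\cZ}\le J^*$. Conversely, each $K_I(z)$ with $z\in\cE_I(\cZ)\subset\cF_{\mathrm{st}}$ lies in $\cC$, so $J^{\mathrm{ex}}_{\cZ}\ge J^*$. Hence $J^{\mathrm{ex}}_{\cZ}=J^*$, and \eqref{eq:alg-gap} reads $J(K_I(z_k))-J^*\le C_{\cZ}\varepsilon_k$, which is a global optimality-gap estimate. No step is a genuine obstacle.
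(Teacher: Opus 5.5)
Your proof is correct and matches the paper's, which simply specializes Corollary~\ref{cor:compact-slice} to each $z_k$. Your additional remarks fill in what the paper leaves implicit: the finiteness of $C_{\cZ}$, the squeeze argument using $J(K_I(z_k))\ge J^{\mathrm{ex}}_{\cZ}$, and the identification $J^{\mathrm{ex}}_{\cZ}=J^*$ when a globally optimal controller lies in the slice.
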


\begin{proof}
This is an immediate specialization of Corollary~\ref{cor:compact-slice} to the points $z_k$.
\end{proof}

Corollary~\ref{cor:algorithmic} is deliberately abstract. It does not prove that a particular Goldstein, gradient-sampling, or bundle method generates exact iterates in a compact slice, nor does it establish a complexity bound for driving $\varepsilon_k$ to zero. These are separate algorithmic questions. What the corollary does provide is the missing bridge from approximate stationarity to value suboptimality \emph{once} a reconditioned method is analyzed on the exact slice. This is the dynamic output-feedback counterpart (and hence more general) of the value-gap open problem explicitly raised in the state-feedback case by \citet{guo2022global}, now considered on the nondegenerate ECL manifold introduced by \citet{tang2023global}.

It is widely known that even $\varepsilon$-Clarke stationarity faces computational intractability~\citep{zhang2020complexity,guo2022global}. Therefore, in order to obtain concrete convergence rates, we turn to more relaxed solution concepts such as $(\delta, \varepsilon)$-Goldstein stationarity. While in general $\distop\!\bigl(0,\partial_\delta J(K) \bigr) \leq \distop\!\bigl(0,\partial J(K) \bigr)$, under an outer continuity assumption on the Clarke subdifferential map $K \mapsto \partial J(J)$, we can recover a reverse inequality up to an additive term. This, in turn, allows us to connect off-the-shelf convergence guarantees of nonsmooth optimization algorithms such as \citet{zhang2020complexity,davis2022gradient} for finding Goldstein stationary points to convergence of the objective value for the $H_\infty$ policy search problem. We formalize this result in the following.

\begin{assumption} \label{def:Hausdorff}
    The Clarke subdifferential map $K \mapsto \partial J(K)$ is one-sided Hausdorff continuous; that is, for each $K \in \mathcal{C}_{nd}$, there exists a nondecreasing modulus $\psi_K : [0, \infty) \mapsto [0, \infty)$, with $\omega_K(r) \rightarrow 0$ as $r \downarrow 0$, such that
    \[
    \partial J(K') \subset \partial J(K) + \psi_K(\| K' - K \|) \mathbb{B}_2(0, 1),
    \]
    for all $K'$ near $K$.
\end{assumption}

A commonly used special case of Assumption~\ref{def:Hausdorff} is the linear one, where $\psi_K(\| K' - K \|) = a \| K' - K \|$ for a positive constant $a$, which \citet{kong2025lipschitz} refer to as upper Lipschitzness of the subdifferential.

In turn, we relate Assumption~\ref{def:Hausdorff} to a relationship between $\distop\!\bigl(0,\partial_\delta J(K) \bigr)$ and $\distop\!\bigl(0,\partial J(K) \bigr)$.

\begin{proposition}\label{prop:clarke_to_gold}
    Under Assumption~\ref{def:Hausdorff}, we have
    \[
    \distop\!\bigl(0,\partial J(K) \bigr) \leq \distop\!\bigl(0,\partial_\delta J(K) \bigr) + \psi_K(\delta).
    \]
\end{proposition}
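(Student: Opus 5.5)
The idea is to show that the Goldstein subdifferential $\partial_\delta J(K)$ lies inside a $\psi_K(\delta)$-enlargement of $\partial J(K)$, and then read off the distance inequality. We work at a fixed $K\in\cC_{\mathrm{nd}}$ and take $\delta>0$ small enough that the ball $B_2(K,\delta)$ is contained in $\cC$ (possible because $\cC$ is open). We also need $\delta$ small enough that every $K'$ in this ball counts as ``near $K$'' in Assumption~\ref{def:Hausdorff}. For larger $\delta$ the statement should be read with this implicit restriction, or $\psi_K$ extended; I will note that this is part of what the assumption supplies.

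\emph{Step 1 (pointwise inclusion).} For every $K'\in B_2(K,\delta)$, the assumption together with monotonicity of $\psi_K$ gives
\[
\partial J(K')\subset \partial J(K)+\psi_K(\|K'-K\|)\,\mathbb{B}_2(0,1)\subset \partial J(K)+\psi_K(\delta)\,\mathbb{B}_2(0,1).
\]
Taking the union over the ball, the union is contained in the set $\partial J(K)+\psi_K(\delta)\mathbb{B}_2(0,1)$.

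\emph{Step 2 (convex hull).} The set $\partial J(K)$ is convex and compact, and the unit ball is convex. Their Minkowski sum is therefore convex and closed. Taking the convex hull of the union from Step~1 stays inside this set, so
\[
\partial_\delta J(K)\subset \partial J(K)+\psi_K(\delta)\mathbb{B}_2(0,1).
\]
This convexity observation is the main point that needs care, since the Goldstein set is defined as a convex hull rather than a plain union.

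\emph{Step 3 (distances).} The set $\partial_\delta J(K)$ is compact, being the convex hull of a bounded set from local Lipschitzness; if needed, use its closure. Pick $G\in\partial_\delta J(K)$ with $\|G\|=\distop(0,\partial_\delta J(K))$, or an infimizing sequence. By Step~2 we can write $G=G_0+u$ with $G_0\in\partial J(K)$ and $\|u\|\le\psi_K(\delta)$. Then
\[
\distop(0,\partial J(K))\le\|G_0\|\le\|G\|+\|u\|\le \distop(0,\partial_\delta J(K))+\psi_K(\delta).
\]
This is the claim.

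The only subtle issue is the domain bookkeeping: the ball must lie in $\cC$ and within the neighborhood where the assumption applies. The algebra itself is routine.
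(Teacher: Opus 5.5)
Your proposal is correct and follows the paper's route: enclose every $\partial J(K')$ with $K'$ in the $\delta$-ball inside the convex set $\partial J(K)+\psi_K(\delta)\mathbb{B}_2(0,1)$, pass to the convex hull, and read off the distance bound by the triangle inequality. Your version is slightly more careful than the paper's, since you invoke monotonicity of $\psi_K$ explicitly to replace the $K'$-dependent radius $\psi_K(\|K'-K\|)$ by $\psi_K(\delta)$, and you state the implicit requirement that $\delta$ be small enough for the ball to lie in $\cC$ and in the neighborhood where Assumption~\ref{def:Hausdorff} applies.
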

\begin{proof}
    Because the set $\partial J(K) + \psi_K(\| K' - K \|) \mathbb{B}_2(0, 1)$ is convex and contains every $\partial J(K')$ for all $K' \in \mathbb{B}_2(K, \delta)$, we have
    \[
    \partial J_\delta(K) \subset \partial J(K) + \psi_K(\delta) \mathbb{B}_2(0, 1).
    \]
    Therefore,
    \[
    \distop\!\bigl(0,\partial J(K) \bigr) - \psi_K(\delta) \leq \distop\!\bigl(0,\partial_\delta J(K) \bigr),
    \]
    and the proof is complete.
\end{proof}

\begin{theorem}[Convergence rate]
    Under Assumption~\ref{def:Hausdorff}, the Interpolated Normalized Gradient Descent (INGD) algorithm~\citep{zhang2020complexity} takes $\tilde{\mathcal{O}}(\dfrac{1}{ \delta \varepsilon^3})$ gradient and function valuations to find a $z \in \cE_I$ such that $K_I(z)$ is an $(\varepsilon, \delta)$-Goldstein stationary point and more importantly $K_I(z)$ is $C_{\cZ}\, \mleft(\varepsilon + \psi_K(\delta)\mright)$-suboptimal on the set $\mathcal{Z}$,
    \[
    J\bigl(K_I(z)\bigr)-J^{\mathrm{ex}}_{\cZ}
    \le
    C_{\cZ}\, \mleft(\varepsilon + \psi_K(\delta)\mright).
    \]
\end{theorem}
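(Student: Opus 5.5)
The plan is to obtain the theorem by composing three pieces. The first is an off-the-shelf complexity bound for INGD. The second is Proposition~\ref{prop:clarke_to_gold}, which converts Goldstein stationarity into Clarke stationarity. The third is Corollary~\ref{cor:compact-slice}, which converts Clarke stationarity into a value gap.

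\emph{Step 1: the algorithmic guarantee.} We invoke the guarantee of \citet{zhang2020complexity}. For a Lipschitz function bounded below, with initial gap $\Delta$ and Lipschitz constant $L$, INGD returns a $(\delta,\varepsilon)$-Goldstein stationary point using $\tilde{\mathcal O}(\Delta L^2/(\delta\varepsilon^3))$ oracle calls with high probability. To apply it, we run INGD on $K\mapsto J(K)$ restricted to the controllers $K_I(z)$ with $z\in\cE_I(\cZ)$. This requires two constants.
\begin{itemize}
\item The Lipschitz constant $L$ is taken uniform over the compact image $K_I(\cZ)\subset\cC$. It exists because $J$ is locally Lipschitz (Proposition~\ref{prop:TZ-regular}), $K_I$ is continuous, and $\cZ$ is compact.
\item The gap $\Delta$ is bounded by $\max_{\cZ}J\circ K_I - J^{\mathrm{ex}}_{\cZ}$, which is finite by Lemma~\ref{lem:closed-exact}.
\end{itemize}
With $L$ and $\Delta$ treated as constants absorbed into $\tilde{\mathcal O}$, the rate $\tilde{\mathcal O}(1/(\delta\varepsilon^3))$ follows.

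\emph{Step 2: from Goldstein to Clarke stationarity.} Let $K=K_I(z)$ be the output, so $\distop(0,\partial_\delta J(K))\le\varepsilon$. We need $K\in\cC_{\mathrm{nd}}$ for Assumption~\ref{def:Hausdorff} to apply at $K$. This holds because $z\in\cE_I$ gives $\Psi_I(z)=(K,P_I(z),J(K))\in\cS_{\mathrm{nd}}$ with $K\in\cC$. Proposition~\ref{prop:clarke_to_gold} then gives
\[
\distop(0,\partial J(K))\le\varepsilon+\psi_K(\delta).
\]

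\emph{Step 3: from Clarke stationarity to the value gap.} Since $z\in\cE_I(\cZ)$, Corollary~\ref{cor:compact-slice} applies directly:
\[
J(K_I(z))-J^{\mathrm{ex}}_{\cZ}\le C_{\cZ}\,\distop(0,\partial J(K_I(z)))\le C_{\cZ}(\varepsilon+\psi_K(\delta)).
\]

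\emph{Main obstacle.} The hard step is the one the paper's Corollary~\ref{cor:algorithmic} flags: making sure the iterate INGD returns actually lies in $\cE_I(\cZ)$. INGD acts in controller space, and its iterates need not be exact, stay in $\cZ$, or even remain stabilizing. My first approach is to interpret the statement as running INGD on the controllers $K_I(z)$ and then re-exactifying. Given any $K\in\cC_{\mathrm{nd}}$, Proposition~\ref{prop:slice-chart} produces $z\in\cF_{\mathrm{st}}$ with $K_I(z)$ realization-equivalent to $K$ and $\gamma(z)=J(K)$, so $z\in\cE_I$.

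Realization equivalence leaves $J$ unchanged (Lemma~\ref{lem:similarity}). However, the subdifferential changes under the linear similarity map $\mathcal T_S$, so I would also need to check that Goldstein stationarity is preserved up to a constant depending on the condition number of $S$. That constant must then be absorbed into $C_{\cZ}$.

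If that approach is too delicate, the fallback is to state the result conditionally: assume the INGD output lies in $\cE_I(\cZ)$, which is exactly the hypothesis Corollary~\ref{cor:algorithmic} isolates. Steps 2 and 3 are then routine.
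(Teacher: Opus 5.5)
Your argument is the paper's proof. The paper likewise composes the INGD rate (citing \citet[Theorem~3.2]{davis2022gradient}), Proposition~\ref{prop:clarke_to_gold} and Corollary~\ref{cor:algorithmic}, and it never addresses the membership issue you flag, so its proof is in effect your conditional fallback with the INGD output assumed to lie in $\cE_I(\cZ)$. Neither the re-exactification route nor the uniform Lipschitz constant on $K_I(\cZ)$ in your Step~1 removes that hypothesis, because INGD's iterates need not stay in $K_I(\cZ)$ or even in $\cC$, its output need not lie in $\cC_{\mathrm{nd}}$, the re-exactified lift need not lie in $\cZ$, and the unbounded conditioning of $S=P_{12}$ cannot be absorbed into $C_{\cZ}$.
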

\begin{proof}
    The proof follows immediately by combining Proposition~\ref{prop:clarke_to_gold}, Corollary~\ref{cor:algorithmic}, and the analysis of INGD from \citet[Theorem~3.2]{davis2022gradient}.
\end{proof}

\section{Global value computation and strict controller recovery in the convex lift}\label{sec:global-value}
The exact-slice argument of Section~\ref{sec:quantitative} is intrinsically local as it converts first-order information at an exact point into a value gap relative to an exact comparison point. The value equivalence used in this section is an established consequence of extended convex lifting (ECL). In particular, \citet[Theorem~2.1]{zheng2025extended} show that a policy objective equipped with an ECL has the same infimum as its convex lifted problem, and \citet[Corollary~4.1]{zheng2025extended} state this conclusion explicitly for dynamic output-feedback $H_\infty$ control. We use this identity to distinguish nonstrict value computation from strict stabilizing-controller recovery and to formulate the resulting feasibility-bisection procedure. Although a nonstrict point of $\cF$ need not decode to an internally stabilizing controller, strict lifted points do support stabilizing controller recovery.

\subsection{Strict lifted points}

Define $
\cF_{\mathrm{str}}:=\{z\in\cF:\cM(z)\prec0\}$.

\begin{lemma}\label{lem:strict-lifted}
For every $z\in\cF_{\mathrm{str}}$,
\[
\Psi_I(z)=\bigl(K_I(z),P_I(z),\gamma(z)\bigr)\in \cS_I\subset \cS_{\mathrm{nd}},
\]
and $K_I(z)\in\cC$ with
\[
J\bigl(K_I(z)\bigr)<\gamma(z).
\]
\end{lemma}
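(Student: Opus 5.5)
The plan has three steps: place $\Psi_I(z)$ in the certificate set via the slice chart, transfer strictness of the LMI through Proposition~\ref{prop:TZ-ECL}, and then read off stability and the strict norm bound from the strict bounded-real inequality.

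\emph{Membership.} Since $\cF_{\mathrm{str}}\subset\cF$, Proposition~\ref{prop:slice-chart} gives $\Psi_I(z)=\Psi(I_{n_x},z)\in\cS_I\subset\cS_{\mathrm{nd}}$. Write $(K,P,\gamma)=\Psi_I(z)$, so that $P\succ0$ and $\gamma=\gamma(z)$. The last identity uses that the $\gamma$-coordinate is carried through the chart unchanged; this is exactly how $\gamma(z)$ was defined in the slice parametrization.

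\emph{Strictness transfer.} By the last sentence of Proposition~\ref{prop:TZ-ECL}, $\cM(z)\prec0$ implies $N(K,P,\gamma)\prec0$.

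\emph{Stability.} A strict inequality $N\prec0$ forces its $(1,1)$ block to be negative definite, so
\[
A_{\mathrm{cl}}(K)^\transpose P+PA_{\mathrm{cl}}(K)\prec0 .
\]
Together with $P\succ0$, the standard Lyapunov argument shows that $A_{\mathrm{cl}}(K)$ is Hurwitz. Hence $K=K_I(z)\in\cC$.

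\emph{Strict norm bound.} This is where the strict inequality matters, and I expect it to be the main obstacle. Since $N(K,P,\gamma)\prec0$ is an open condition, there is $\eta>0$ with $N(K,P,\gamma-\eta)\preceq0$. Indeed, $N(K,P,\gamma-\eta)=N(K,P,\gamma)+\eta\,\operatorname{diag}(0,I,I)$, and this stays negative semidefinite for $\eta$ at most the smallest eigenvalue of $-N(K,P,\gamma)$.

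We have $K\in\cC$ and $P\succ0$. Applying the nonstrict bounded-real lemma (as already used for $\cS_{\mathrm{nd}}^{\mathrm{st}}$) then gives $J(K)\le\gamma-\eta<\gamma$. One small check remains: $(K,P,\gamma-\eta)$ must still lie in $\cS_{\mathrm{nd}}$. It does, because $P$ and $P_{12}=I$ are unchanged, and the bound only requires $\gamma-\eta>0$. That holds since the $(2,2)$ block of $N(K,P,\gamma-\eta)$ is $-(\gamma-\eta)I\preceq0$, and strictness at level $\gamma$ already gives $\gamma>0$.

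Alternatively, one could cite the strict bounded-real lemma directly. The perturbation argument above has the advantage of staying within the results the paper has already invoked.
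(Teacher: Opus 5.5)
Your proof is correct and follows the paper's argument step for step: membership via Proposition~\ref{prop:slice-chart}, strictness transfer via Proposition~\ref{prop:TZ-ECL}, and Hurwitz stability from the $(1,1)$ block plus the Lyapunov theorem. The only difference is the final inequality $J(K_I(z))<\gamma(z)$, where the paper simply cites the strict bounded-real lemma, while you derive it from the nonstrict one by lowering $\gamma$ to $\gamma-\eta$ via $N(K,P,\gamma-\eta)=N(K,P,\gamma)+\eta\operatorname{diag}(0,I,I)$; this is valid and keeps the argument within tools the paper already uses. Your side check on $\gamma-\eta>0$ is unnecessary, since membership in $\cS_{\mathrm{nd}}$ imposes no sign condition on $\gamma$. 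As written it also only yields $\gamma-\eta\ge0$; taking $\eta$ strictly below $\lambda_{\min}(-N(K,P,\gamma))\le\gamma$ would make it strict if you want it.
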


\begin{proof}
By Proposition~\ref{prop:slice-chart}, $\Psi_I(z)\in\cS_I\subset\cS_{\mathrm{nd}}$. The strict-definiteness correspondence in Proposition~\ref{prop:TZ-ECL} gives
\[
N\bigl(K_I(z),P_I(z),\gamma(z)\bigr)\prec0.
\]
Its leading principal block therefore satisfies
\[
A_{\mathrm{cl}}(K_I(z))^\top P_I(z)+P_I(z)A_{\mathrm{cl}}(K_I(z))\prec0.
\]
Since $P_I(z)\succ0$, the Lyapunov theorem implies $K_I(z)\in\cC$. The strict bounded-real lemma then gives $J(K_I(z))<\gamma(z)$.
\end{proof}

\subsection{Strict nondegenerate approximation of stabilizing controllers}

\begin{lemma}\label{lem:strict-brl}
Let $K\in\cC$ and let $\bar\gamma>J(K)$. Then there exists $P\in\Sbb^{2n_x}_{++}$ such that
\[
N(K,P,\bar\gamma)\prec 0.
\]
\end{lemma}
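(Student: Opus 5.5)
This is the strict bounded-real lemma; the plan is to invoke it in its standard form and then convert it into the Schur-complemented matrix $N$. Write $(A_c,B_c,C_c,D_c)$ for the closed-loop matrices of $K$, so $A_c$ is Hurwitz and $\|C_c(sI-A_c)^{-1}B_c+D_c\|_\infty=J(K)<\bar\gamma$.

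\textbf{Step 1: invoke the strict bounded-real lemma.} In its $\bar\gamma$-scaled symmetric form (for instance as stated in \citet{masubuchi1998lmi} or the standard references behind \citet{doyle1988state}), it asserts the existence of $P\succ0$ such that
\[
\begin{bmatrix} A_c^\transpose P+PA_c & PB_c & C_c^\transpose\\ B_c^\transpose P & -\bar\gamma I & D_c^\transpose\\ C_c & D_c & -\bar\gamma I\end{bmatrix}\prec0 .
\]
This matrix is exactly $N(K,P,\bar\gamma)$, so the conclusion follows directly.

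\textbf{Step 2 (fallback): the Riccati route.} If we prefer not to cite the LMI form directly, we would proceed as follows.
\begin{itemize}
\item Note that $\|D_c\|<\bar\gamma$, since $\|D_c\|=\lim_{\omega\to\infty}\|T_{zw}(i\omega;K)\|\le J(K)$. Hence $R:=\bar\gamma I-\bar\gamma^{-1}D_c^\transpose D_c\succ0$.
\item Take a Schur complement with respect to the $-\bar\gamma I$ block in the third row and column. The condition $N\prec0$ becomes the $2\times2$ condition
\[
\begin{bmatrix} A_c^\transpose P+PA_c+\bar\gamma^{-1}C_c^\transpose C_c & PB_c+\bar\gamma^{-1}C_c^\transpose D_c\\ \ast & -R\end{bmatrix}\prec0 .
\]
\item A second Schur complement, using $-R\prec0$, reduces this to the strict Riccati inequality
\[
A_c^\transpose P+PA_c+\bar\gamma^{-1}C_c^\transpose C_c+(PB_c+\bar\gamma^{-1}C_c^\transpose D_c)R^{-1}(\ast)^\transpose\prec0 .
\]
\item Solve this inequality by a perturbation argument. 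Apply the nonstrict (KYP) bounded-real result to the augmented system with output matrix $[C_c;\,\eta I]$, input matrix $[B_c,\ \eta I]$, and feedthrough $\operatorname{diag}(D_c,0)$. For small $\eta>0$ its $H_\infty$ norm remains below $\bar\gamma$, because the norm is continuous in the data at the Hurwitz $A_c$.
\item The stabilizing Riccati solution $P_\eta\succeq0$ of the augmented equality then satisfies the original inequality with slack at least $\eta^2 I$, which gives strictness.
\item Finally, $P_\eta\succ0$. From the $(1,1)$ block we have $A_c^\transpose P_\eta+P_\eta A_c\prec0$, and $A_c$ is Hurwitz, so the Lyapunov inequality forces $P_\eta\succ0$.
\end{itemize}

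The main obstacle is the strictness and positive definiteness in Step 2. The frequency-domain condition naturally yields only a nonstrict inequality with $P\succeq0$. The $\eta$-augmentation, together with continuity of the $H_\infty$ norm, is the device I would use to obtain a strict inequality with $P\succ0$. In the paper itself I would simply cite the strict bounded-real lemma as a standard fact, since the earlier proof of Lemma~\ref{lem:strict-lifted} already uses it in the converse direction.
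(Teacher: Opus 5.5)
Your Step 1 is exactly the paper's argument: the paper's proof simply cites the standard continuous-time strict bounded-real lemma, whose $\bar\gamma$-scaled LMI form is literally $N(K,P,\bar\gamma)\prec0$, so your proposal is correct and follows the same route. Your Riccati fallback is the usual textbook derivation of that lemma and is sound; the only nit is that in your $\bar\gamma$-scaled normalization the $\eta$-augmentation yields slack $\bar\gamma^{-1}\eta^2(I+P_\eta^2)\succeq\bar\gamma^{-1}\eta^2 I$ rather than $\eta^2 I$, which is immaterial for strictness.
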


\begin{proof}
This is the standard strict bounded-real lemma in continuous time; see also the lemma invoked in the proof of Proposition~4 in \citet{tang2023global}.
\end{proof}

\begin{lemma}[Strict nondegenerate approximation]\label{lem:strict-approx}
Let $K\in\cC$ and let $\eta>0$. Then there exists $z\in\cF_{\mathrm{str}}$ such that $K_I(z)$ is realization-equivalent to $K$ and
\[
\gamma(z)\le J(K)+\eta.
\]
In fact, one may arrange $\gamma(z)=J(K)+\eta$.
\end{lemma}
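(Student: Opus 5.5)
Set $\bar\gamma:=J(K)+\eta>J(K)$. The plan is to build a strict bounded-real certificate at level $\bar\gamma$, perturb it so its off-diagonal block is invertible, gauge-fix it to the identity slice, and then read off a lifted point through the chart $\Phi$.

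First, Lemma~\ref{lem:strict-brl} gives $P\in\Sbb^{2n_x}_{++}$ with $N(K,P,\bar\gamma)\prec0$. This $P$ need not be nondegenerate, since $P_{12}$ may be singular. Because the strict inequalities $P\succ0$ and $N(K,P,\bar\gamma)\prec0$ define an open set in $P$, any sufficiently small symmetric perturbation of $P$ keeps both. Take $P^\epsilon:=P+\epsilon E$ with $E=\begin{bmatrix}0&I\\I&0\end{bmatrix}$. Then $P^\epsilon_{12}=P_{12}+\epsilon I$, which is singular only when $-\epsilon$ is an eigenvalue of $P_{12}$, so for at most finitely many $\epsilon$. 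Choosing $\epsilon>0$ small and avoiding those values gives $P^\epsilon\succ0$, $P^\epsilon_{12}\in GL_{n_x}$, and $N(K,P^\epsilon,\bar\gamma)\prec0$. Hence $(K,P^\epsilon,\bar\gamma)\in\cS_{\mathrm{nd}}$ with strict inequality. This perturbation step is the main point that needs care, but it only uses openness and the finiteness of the spectrum.

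Next, apply the construction of Lemma~\ref{lem:gauge-fix} with $S:=P^\epsilon_{12}$ and $T=\operatorname{diag}(I,S)$. This produces $\widehat K=\mathcal T_S(K)$, realization-equivalent to $K$, and $\widehat P$ with $\widehat P_{12}=I$. The congruence identity in that proof is
\[
N(\widehat K,\widehat P,\bar\gamma)=\operatorname{diag}(T^{-\transpose},I,I)\,N(K,P^\epsilon,\bar\gamma)\,\operatorname{diag}(T^{-1},I,I).
\]
Congruence by an invertible matrix preserves strict negativity, so $N(\widehat K,\widehat P,\bar\gamma)\prec0$ and $(\widehat K,\widehat P,\bar\gamma)\in\cS_I$.

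Then use Proposition~\ref{prop:slice-chart} to obtain $z\in\cF$ with $\Psi_I(z)=(\widehat K,\widehat P,\bar\gamma)$. This gives $K_I(z)=\widehat K$ and $\gamma(z)=\bar\gamma=J(K)+\eta$; the $\gamma$ coordinate is carried through unchanged because $\gamma(z)$ is by definition the third component of $\Psi_I(z)$. By the strict-definiteness equivalence in Proposition~\ref{prop:TZ-ECL}, $N\prec0$ implies $\cM(z)\prec0$, so $z\in\cF_{\mathrm{str}}$.

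Finally, Lemma~\ref{lem:similarity} shows that $K_I(z)$ is realization-equivalent to $K$ and is stabilizing. This proves the claim with equality $\gamma(z)=J(K)+\eta$.
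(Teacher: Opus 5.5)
Your proof is correct and follows the paper's argument step for step: a strict bounded-real certificate at level $\bar\gamma=J(K)+\eta$, a small off-diagonal perturbation to make $P_{12}$ invertible, gauge fixing by congruence (which preserves strictness), and decoding through $\Psi_I$ using the strict-definiteness correspondence of Proposition~\ref{prop:TZ-ECL}. The only difference is cosmetic. The paper picks a generic small $E$ using density of $GL_{n_x}$, whereas you take the explicit perturbation $\epsilon\begin{bmatrix}0&I\\I&0\end{bmatrix}$ and avoid the finitely many $\epsilon$ with $-\epsilon\in\operatorname{spec}(P_{12})$; this is a valid concrete instance of the same step.
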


\begin{proof}
Set $\bar\gamma:=J(K)+\eta$. By Lemma~\ref{lem:strict-brl}, there exists $P\succ0$ such that
\[
N(K,P,\bar\gamma)\prec0.
\]
Because positive definiteness and strict negative definiteness are open conditions, there exists $\delta>0$ such that whenever $E\in\R^{n_x\times n_x}$ satisfies $\|E\|<\delta$, the symmetric perturbation
\[
\widetilde P:=P+\begin{bmatrix}0&E\\E^\transpose&0\end{bmatrix}
\]
still obeys
\[
\widetilde P\succ0,
\qquad
N(K,\widetilde P,\bar\gamma)\prec0.
\]
Since $GL_{n_x}$ is open and dense in $\R^{n_x\times n_x}$, we may choose such an $E$ with $\widetilde P_{12}=P_{12}+E\in GL_{n_x}$. Hence
\[
(K,\widetilde P,\bar\gamma)\in\cS_{\mathrm{nd}}.
\]

Applying Lemma~\ref{lem:gauge-fix}, we obtain a controller $\widehat K$ realization-equivalent to $K$ and a matrix $\widehat P$ such that
\[
(\widehat K,\widehat P,\bar\gamma)\in\cS_I,
\qquad
N(\widehat K,\widehat P,\bar\gamma)\prec0,
\]
where strictness is preserved by the congruence in the proof of Lemma~\ref{lem:gauge-fix}. Now Proposition~\ref{prop:slice-chart} yields a unique $z\in\cF$ satisfying
\[
\Psi_I(z)=\bigl(\widehat K,\widehat P,\bar\gamma\bigr).
\]
By Proposition~\ref{prop:TZ-ECL}, $\cM(z)\prec0$, so $z\in\cF_{\mathrm{str}}$. Thus $K_I(z)=\widehat K$ is realization-equivalent to $K$, and
\[
\gamma(z)=\bar\gamma=J(K)+\eta,
\]
which proves the claim.
\end{proof}

\subsection{Value identity}

Now, we reiterate the value identity of ECL which indicates that $
\inf_{z\in\cF}\gamma(z) = 
\inf_{K\in\cC}J(K)$.

\begin{proposition}[Lifted-value identity]\label{prop:global-value}
Assume $\cC\neq\varnothing$. Then the global optimal value of \eqref{eq:policyopt} satisfies
\begin{equation}
J^\star:=\inf_{K\in\cC}J(K)=\inf_{z\in\cF}\gamma(z)=:J_{\mathrm{lift}}.
\label{eq:global-value}
\end{equation}
No attainment assumption on a prescribed exact slice is required.
\end{proposition}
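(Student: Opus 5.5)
We prove the two inequalities $J_{\mathrm{lift}}\le J^\star$ and $J^\star\le J_{\mathrm{lift}}$ separately. The first uses Lemma~\ref{lem:strict-approx}. The second uses an interior-perturbation argument on the convex set $\cF$ combined with Lemma~\ref{lem:strict-lifted}.

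\emph{Upper bound $J_{\mathrm{lift}}\le J^\star$.} Since $\cC\neq\varnothing$, fix $K\in\cC$ and $\eta>0$. Lemma~\ref{lem:strict-approx} produces $z\in\cF_{\mathrm{str}}\subset\cF$ with $\gamma(z)=J(K)+\eta$. Hence $J_{\mathrm{lift}}\le J(K)+\eta$. Taking the infimum over $K$ and then letting $\eta\downarrow0$ gives the bound, and it also shows $J_{\mathrm{lift}}<\infty$.

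\emph{Lower bound $J^\star\le J_{\mathrm{lift}}$.} Fix $z\in\cF$. A nonstrict point need not decode to a stabilizing controller, so we first move it into $\cF_{\mathrm{str}}$. By the first step $\cF_{\mathrm{str}}\neq\varnothing$; pick some $\hat z\in\cF_{\mathrm{str}}$. Since $\cM$ is affine and $\cF$ is convex, the point $z_\lambda:=(1-\lambda)z+\lambda\hat z$ satisfies
\[
\cM(z_\lambda)=(1-\lambda)\cM(z)+\lambda\cM(\hat z)\prec0 \quad\text{for } \lambda\in(0,1].
\]
The coupling constraint $\begin{bmatrix}X&I\\I&Y\end{bmatrix}\succ0$ is also preserved, because it is a convex combination of positive definite matrices with the same identity off-diagonal blocks. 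Thus $z_\lambda\in\cF_{\mathrm{str}}$. Lemma~\ref{lem:strict-lifted} then gives $K_I(z_\lambda)\in\cC$ and
\[
J^\star\le J\bigl(K_I(z_\lambda)\bigr)<\gamma(z_\lambda)=(1-\lambda)\gamma(z)+\lambda\gamma(\hat z).
\]
Letting $\lambda\downarrow0$ yields $J^\star\le\gamma(z)$. Taking the infimum over $z\in\cF$ gives $J^\star\le J_{\mathrm{lift}}$.

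Combining the two inequalities gives \eqref{eq:global-value}. The argument takes infima on both sides and never assumes an attained exact point, which justifies the remark that no attainment assumption is needed.

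The main obstacle is the lower bound, because nonstrict lifted points may decode to non-Hurwitz controllers; the strict-interior perturbation is exactly what handles this. The step that needs care is checking that the convex combination keeps both defining constraints of $\cF$ and makes $\cM$ strictly negative. This holds because $\cM$ is affine in $z$, including in $\gamma$, and because a convex combination of a negative semidefinite and a negative definite matrix with weight $\lambda>0$ on the latter is negative definite.
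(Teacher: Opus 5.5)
Your proof is correct and uses the same two ingredients the paper names: Lemma~\ref{lem:strict-approx} gives $J_{\mathrm{lift}}\le J^\star$, and convex interpolation from an arbitrary point of $\cF$ toward a point of $\cF_{\mathrm{str}}$, decoded via Lemma~\ref{lem:strict-lifted}, gives $J^\star\le J_{\mathrm{lift}}$. The only difference is that the paper uses these two facts to verify the hypotheses of the cited ECL value theorem \citep[Theorem~2.1]{zheng2025extended}, whereas you prove the two inequalities directly, so your argument is self-contained.
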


\begin{proof}
This is an application of the general extended-convex-lifting value theorem of \citet[Theorem~2.1]{zheng2025extended}; its dynamic output-feedback $H_\infty$ specialization is stated in \citet[Corollary~4.1]{zheng2025extended}. For the generalized-plant coordinates used here, Proposition~\ref{prop:TZ-ECL} supplies the lifting diffeomorphism and preserves the $\gamma$ coordinate. Lemma~\ref{lem:strict-approx} supplies strict lifted certificates arbitrarily close to every stabilizing-controller cost, while convex interpolation between any point of $\cF$ and a strict point of $\cF_{\mathrm{str}}$ supplies the required closure inclusion for the nonstrict lift. Thus the hypotheses of the cited value theorem hold and yield \eqref{eq:global-value}.
\end{proof}

\begin{corollary}\label{cor:lift-attain}
Suppose there exists $z^\star\in\cF_{\mathrm{st}}$ such that $
\gamma(z^\star)=\min_{z\in\cF}\gamma(z)=J_{\mathrm{lift}}$. Then $K_I(z^\star)$ is globally optimal and $
J\bigl(K_I(z^\star)\bigr)=\gamma(z^\star)=J^\star$.
\end{corollary}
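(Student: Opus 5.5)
The argument is a sandwich: bound $J(K_I(z^\star))$ above by $\gamma(z^\star)$ using the nonstrict bounded-real lemma, and below by $J^\star$ using the value identity of Proposition~\ref{prop:global-value}.

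First, since $z^\star\in\cF_{\mathrm{st}}$, the slice chart of Proposition~\ref{prop:slice-chart} gives
\[
\Psi_I(z^\star)=\bigl(K_I(z^\star),P_I(z^\star),\gamma(z^\star)\bigr)\in\cS_I^{\mathrm{st}}.
\]
That is, $K_I(z^\star)\in\cC$, $P_I(z^\star)\succ0$, $P_{12}=I$ is invertible, and $N\bigl(K_I(z^\star),P_I(z^\star),\gamma(z^\star)\bigr)\preceq0$.

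Because the controller is stabilizing, the nonstrict bounded-real lemma applies exactly as in the remark following \eqref{eq:Snd-def}. It yields $J(K_I(z^\star))\le\gamma(z^\star)$. This is the same inequality used in \eqref{eq:bounded-real-path} in the proof of Theorem~\ref{thm:pairwise}.

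Second, $K_I(z^\star)\in\cC$ is feasible for \eqref{eq:policyopt}, so $J^\star\le J(K_I(z^\star))$. In particular $\cC\neq\varnothing$, so Proposition~\ref{prop:global-value} applies and gives $J^\star=J_{\mathrm{lift}}$. By hypothesis, $\gamma(z^\star)=J_{\mathrm{lift}}$.

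Chaining these gives
\[
J^\star\le J\bigl(K_I(z^\star)\bigr)\le\gamma(z^\star)=J_{\mathrm{lift}}=J^\star,
\]
so all terms are equal. Hence $K_I(z^\star)$ attains the infimum over $\cC$ and is globally optimal, and $J\bigl(K_I(z^\star)\bigr)=\gamma(z^\star)=J^\star$.

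There is essentially no obstacle beyond confirming that the nonstrict bounded-real lemma is legitimately invoked. The condition $z^\star\in\cF_{\mathrm{st}}$ is exactly what supplies internal stability, which the nonstrict inequality alone does not guarantee. The only other requirement is the nonemptiness hypothesis of Proposition~\ref{prop:global-value}, which is automatic here because $K_I(z^\star)\in\cC$.
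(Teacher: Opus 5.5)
Your proposal is correct and follows the paper's proof essentially verbatim. It obtains stability from $z^\star\in\cF_{\mathrm{st}}$, then $J(K_I(z^\star))\le\gamma(z^\star)$ from the nonstrict bounded-real lemma, then $\gamma(z^\star)=J_{\mathrm{lift}}=J^\star$ from the value identity, with the reverse inequality automatic from the definition of the infimum; your explicit check that $\cC\neq\varnothing$ (so Proposition~\ref{prop:global-value} applies) is a small point the paper leaves implicit.
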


\begin{proof}
Since $z^\star\in\cF_{\mathrm{st}}$, $K_I(z^\star)\in\cC$. Also $\Psi_I(z^\star)\in\cS_I\subset\cS_{\mathrm{nd}}$, so the nonstrict bounded-real lemma gives
\[
J\bigl(K_I(z^\star)\bigr)\le \gamma(z^\star)=J_{\mathrm{lift}}=J^\star.
\]
Since $J^\star$ is the infimum of $J$ over $\cC$, the reverse inequality is automatic. Hence equality holds throughout.
\end{proof}

\vspace{-0.1cm}
\begin{remark}\label{rem:attain}
Proposition~\ref{prop:global-value} records a statement about the optimal value, not about pointwise stable decoding of the full nonstrict lift. A minimizer in $\cF$ need not lie in $\cF_{\mathrm{st}}$. If such a boundary minimizer $z^\star$ exists, then convex combinations with any $z^\circ\in\cF_{\mathrm{str}}$ yield strict lifted points whose decoded stabilizing-controller costs converge to $J^\star$. Degeneracy may also prevent the infimum from being attained by a point of $\cF$, but neither phenomenon affects the value identity $J^\star=J_{\mathrm{lift}}$. Nondegeneracy and exactness re-enter when one wants local first-order certificates on the identity slice.
\end{remark}

\subsection{Bisection over the convex lift}

For a fixed scalar $\bar\gamma\in\R$, consider the feasibility problems
\[
\mathrm{Feas}(\bar\gamma):\quad
\exists (X,Y,M,H,F,L)\ \text{such that}\ 
\]
\[
\begin{bmatrix}X&I\\I&Y\end{bmatrix}\succ0, \quad \textrm{and} \quad
\cM(X,Y,M,H,F,L,\bar\gamma)\preceq0, 
\]
and
\[
\mathrm{StrictFeas}(\bar\gamma):\quad
\exists (X,Y,M,H,F,L)\ \text{such that}\ 
\]
\[
\begin{bmatrix}X&I\\I&Y\end{bmatrix}\succ0, \quad \textrm{and} \quad
\cM(X,Y,M,H,F,L,\bar\gamma)\prec0.
\]
These are convex feasibility problems because the constraints are affine in the decision variables $(X,Y,M,H,F,L)$.

\begin{proposition}[Monotone feasibility threshold]\label{prop:feas-threshold}
The feasibility predicate $\mathrm{Feas}(\bar\gamma)$ is monotone nondecreasing in $\bar\gamma$: if $\mathrm{Feas}(\bar\gamma)$ holds and $\bar\gamma'\ge \bar\gamma$, then $\mathrm{Feas}(\bar\gamma')$ also holds. Moreover,
\begin{align*}
    & \mathrm{Feas}(\bar\gamma)\implies \bar\gamma\ge J^\star, \\
    &  \bar\gamma>J^\star\implies \mathrm{StrictFeas}(\bar\gamma)\implies \mathrm{Feas}(\bar\gamma).
\end{align*}

\end{proposition}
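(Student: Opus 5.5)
The proof splits into three claims: monotonicity of $\mathrm{Feas}$, the lower bound $\mathrm{Feas}(\bar\gamma)\implies\bar\gamma\ge J^\star$, and the chain $\bar\gamma>J^\star\implies\mathrm{StrictFeas}(\bar\gamma)\implies\mathrm{Feas}(\bar\gamma)$.

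\emph{Monotonicity.} We read it off the affine structure of $\cM$. The variable $\gamma$ enters \eqref{eq:M-def} only through the two diagonal blocks $-\gamma I$. Hence, for $\bar\gamma'\ge\bar\gamma$,
\[
\cM(X,Y,M,H,F,L,\bar\gamma')=\cM(X,Y,M,H,F,L,\bar\gamma)-(\bar\gamma'-\bar\gamma)\operatorname{diag}(0,0,I,I)\preceq\cM(X,Y,M,H,F,L,\bar\gamma).
\]
The constraint $\begin{bmatrix}X&I\\I&Y\end{bmatrix}\succ0$ does not involve $\gamma$. So any witness for $\mathrm{Feas}(\bar\gamma)$ is also a witness for $\mathrm{Feas}(\bar\gamma')$. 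The same computation shows that $\mathrm{StrictFeas}$ is monotone, although that is not needed here.

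\emph{Lower bound.} If $\mathrm{Feas}(\bar\gamma)$ holds with witness $(X,Y,M,H,F,L)$, then $z:=(X,Y,M,H,F,L,\bar\gamma)\in\cF$ by \eqref{eq:F-def}. Therefore $\bar\gamma=\gamma(z)\ge\inf_{z'\in\cF}\gamma(z')=J_{\mathrm{lift}}$. Proposition~\ref{prop:global-value} gives $J_{\mathrm{lift}}=J^\star$, so $\bar\gamma\ge J^\star$. This step needs $\cC\neq\varnothing$. That assumption is also implicit in the statement, since otherwise $J^\star=+\infty$. Alternatively, it follows from the controllability and observability assumptions on the plant, which guarantee a stabilizing controller.

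\emph{Strict chain.} The implication $\mathrm{StrictFeas}\implies\mathrm{Feas}$ is immediate, because $\cM\prec0$ implies $\cM\preceq0$ and the positivity constraint is the same in both problems. For $\bar\gamma>J^\star$, we use the definition of the infimum in \eqref{eq:policyopt} to pick $K\in\cC$ with $J(K)<\bar\gamma$. We then set $\eta:=\bar\gamma-J(K)>0$ and apply Lemma~\ref{lem:strict-approx} with this $\eta$, using its sharper form $\gamma(z)=J(K)+\eta=\bar\gamma$. This produces $z\in\cF_{\mathrm{str}}$ with last coordinate exactly $\bar\gamma$. Since $z\in\cF$, it satisfies $\begin{bmatrix}X&I\\I&Y\end{bmatrix}\succ0$, and $z\in\cF_{\mathrm{str}}$ gives $\cM(z)\prec0$. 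Its first six coordinates therefore witness $\mathrm{StrictFeas}(\bar\gamma)$.

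\emph{Main obstacle.} The only delicate point is making sure the strict witness has $\gamma$-coordinate exactly $\bar\gamma$, rather than merely at most $\bar\gamma$. The equality clause in Lemma~\ref{lem:strict-approx} handles this directly. If one only had $\gamma(z)\le\bar\gamma$, the strict analogue of the monotonicity computation above would still close the gap, because subtracting a positive semidefinite term preserves $\cM\prec0$.
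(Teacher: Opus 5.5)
Your proof is correct and follows the paper's argument essentially step for step: monotonicity from the $-\gamma I$ blocks of $\cM$, the lower bound from the lifted-value identity (Proposition~\ref{prop:global-value}), and strict feasibility by applying Lemma~\ref{lem:strict-approx} with $\eta=\bar\gamma-J(K)$, using its equality form $\gamma(z)=\bar\gamma$. Your explicit note that the lower bound depends on $\cC\neq\varnothing$ (which the standing controllability/observability assumptions guarantee) is a point of care the paper leaves implicit.
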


\begin{proof}
If $\mathrm{Feas}(\bar\gamma)$ holds, let $(X,Y,M,H,F,L)$ be a feasible point. Replacing $\bar\gamma$ by a larger value $\bar\gamma'\ge \bar\gamma$ only subtracts the positive semidefinite block $(\bar\gamma'-\bar\gamma)\operatorname{diag}(0,0,I,I)$ from $\cM$, so feasibility is preserved. This proves monotonicity.

If $\mathrm{Feas}(\bar\gamma)$ holds, then some $z\in\cF$ has $\gamma(z)=\bar\gamma$. Proposition~\ref{prop:global-value} therefore gives
\[
\bar\gamma\ge \inf_{\widetilde z\in\cF}\gamma(\widetilde z)=J^\star.
\]

Conversely, suppose $\bar\gamma>J^\star$. By the definition of the infimum, there exists $K\in\cC$ with $J(K)<\bar\gamma$. Applying Lemma~\ref{lem:strict-approx} with $\eta=\bar\gamma-J(K)$ produces $z\in\cF_{\mathrm{str}}$ with $\gamma(z)=\bar\gamma$. Hence $\mathrm{StrictFeas}(\bar\gamma)$ holds, and strict feasibility trivially implies nonstrict feasibility.
\end{proof}

\begin{theorem}[Convex-lift bisection algorithm]\label{thm:bisection}
Assume $\cC\neq\varnothing$ and fix $\varepsilon>0$. Consider the following procedure.
\begin{enumerate}
    \item Starting from $m=0$, test $\mathrm{Feas}(2^m)$ until the first feasible index $m=m_0$ is found.
    \item Set $L_0:=0$ and $U_0:=2^{m_0}$.
    \item Given $[L_j,U_j]$, form $G_j:=(L_j+U_j)/2$.
    \begin{itemize}[leftmargin=1em]
        \item If $\mathrm{Feas}(G_j)$ holds, set $L_{j+1}:=L_j$ and $U_{j+1}:=G_j$.
        \item If $\mathrm{Feas}(G_j)$ fails, set $L_{j+1}:=G_j$ and $U_{j+1}:=U_j$.
    \end{itemize}
    \item Stop when $U_j-L_j\le\varepsilon/2$, set
    \[
    \widehat\gamma:=U_j+\frac{\varepsilon}{2},
    \]
    solve $\mathrm{StrictFeas}(\widehat\gamma)$ to obtain $\widehat z\in\cF_{\mathrm{str}}$ with $\gamma(\widehat z)=\widehat\gamma$, and output
    \[
    K_\varepsilon:=K_I(\widehat z).
    \]
\end{enumerate}
Then the procedure uses at most
$
m_0+1+
\max\left\{
0,
\left\lceil
\log_2\!\left(\frac{2U_0}{\varepsilon}\right)
\right\rceil
\right\}
$
nonstrict feasibility tests and one strict-feasibility test. In
particular, the total number of feasibility-oracle calls is
$
\mathcal{O}\!\left(
1+\log\max\{1,J^\star\}
+\log\max\{1,\varepsilon^{-1}\}
\right).
$
On termination,
\[
J^\star\le U_j\le J^\star+\frac{\varepsilon}{2},
\qquad
K_\varepsilon\in\cC,
\qquad
J(K_\varepsilon)<\widehat\gamma\le J^\star+\varepsilon.
\]
Hence $K_\varepsilon$ is an $\varepsilon$-optimal stabilizing controller.
\end{theorem}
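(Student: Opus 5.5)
The argument rests on Proposition~\ref{prop:feas-threshold}, which says that nonstrict feasibility is monotone in $\bar\gamma$, that $\mathrm{Feas}(\bar\gamma)$ forces $\bar\gamma\ge J^\star$, and that every $\bar\gamma>J^\star$ is strictly feasible. I would first show that the doubling phase terminates, then maintain a bracketing invariant through bisection, and finally use Lemma~\ref{lem:strict-lifted} to certify the recovered controller.

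\textbf{Doubling phase.} Since $\cC\neq\varnothing$, $J^\star$ is finite and nonnegative. Choose $m$ with $2^m>J^\star$. Then $\mathrm{Feas}(2^m)$ holds by the proposition, so the first feasible index $m_0$ exists and satisfies $2^{m_0}\le\max\{1,2J^\star\}$.
\begin{itemize}
\item If $m_0\ge1$, then $\mathrm{Feas}(2^{m_0-1})$ fails. Monotonicity gives $2^{m_0-1}\le J^\star$; more precisely, $2^{m_0-1}<J^\star$, or else $2^{m_0-1}=J^\star$, and in either case $2^{m_0}\le 2J^\star$.
\item If $m_0=0$, the bound $2^{m_0}\le1$ is immediate.
\end{itemize}
This phase uses $m_0+1$ tests.

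\textbf{Bisection invariant.} I would prove by induction that $\mathrm{Feas}(U_j)$ holds and $L_j\le J^\star$ for every $j$. At $j=0$ we have $L_0=0\le J^\star$ and $U_0=2^{m_0}$ feasible.
\begin{itemize}
\item If $\mathrm{Feas}(G_j)$ holds, then $U_{j+1}=G_j$ is feasible and $L_{j+1}=L_j$ is unchanged.
\item If $\mathrm{Feas}(G_j)$ fails, the contrapositive of "$\bar\gamma>J^\star\Rightarrow\mathrm{Feas}(\bar\gamma)$" gives $G_j\le J^\star$, so $L_{j+1}=G_j\le J^\star$.
\end{itemize}
Feasibility of $U_j$ gives $U_j\ge J^\star$. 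At termination $U_j-L_j\le\varepsilon/2$, hence $J^\star\le U_j\le L_j+\varepsilon/2\le J^\star+\varepsilon/2$.

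The interval halves at each step, so $U_j-L_j=U_0/2^j$. The stopping rule is therefore met after $\max\{0,\lceil\log_2(2U_0/\varepsilon)\rceil\}$ steps, each costing one nonstrict test. Adding the doubling tests and using $\log U_0\le \log\max\{1,2J^\star\}$ yields the stated $\mathcal{O}(\cdot)$ count.

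\textbf{Recovery step.} Set $\widehat\gamma=U_j+\varepsilon/2$. Then $\widehat\gamma\ge J^\star+\varepsilon/2>J^\star$, so $\mathrm{StrictFeas}(\widehat\gamma)$ holds by the proposition and the oracle returns a point $\widehat z\in\cF_{\mathrm{str}}$ with $\gamma(\widehat z)=\widehat\gamma$. Lemma~\ref{lem:strict-lifted} then gives $K_\varepsilon=K_I(\widehat z)\in\cC$ and $J(K_\varepsilon)<\widehat\gamma\le J^\star+\varepsilon$.

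The main point requiring care is the failed-test branch of the invariant. There one must use the contrapositive of strict feasibility above $J^\star$ to conclude $G_j\le J^\star$, rather than appealing to an exact threshold, because $\mathrm{Feas}(J^\star)$ itself may or may not hold when the infimum is not attained. A secondary bookkeeping point is the bound on $U_0$ in the edge case $J^\star=0$, where $m_0=0$ and $U_0=1$.
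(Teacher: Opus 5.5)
Your proof is correct and follows the paper's argument: termination of the doubling phase, the bracketing invariant maintained through Proposition~\ref{prop:feas-threshold} (with the contrapositive of strict feasibility on the failed branch), and controller recovery through Lemma~\ref{lem:strict-lifted}. You also make explicit the bound $U_0\le\max\{1,2J^\star\}$ needed for the $\mathcal{O}(\cdot)$ oracle count, which the paper leaves implicit. One small slip there: the inequality $2^{m_0-1}\le J^\star$ follows from the contrapositive of $\bar\gamma>J^\star\Rightarrow\mathrm{Feas}(\bar\gamma)$, not from monotonicity.
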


\begin{proof}
Since $\cC\neq\varnothing$, Proposition~\ref{prop:global-value}
gives $J^\star<\infty$. Therefore $2^m>J^\star$ for all
sufficiently large $m$, and Proposition~\ref{prop:feas-threshold}
guarantees that the doubling phase terminates after $m_0+1$
feasibility tests.

Every interval $[L_j,U_j]$ constructed by the algorithm satisfies
$L_j\le J^\star\le U_j$. The base case follows from
$L_0=0\le J^\star$ and feasibility of $U_0$. If
$\mathrm{Feas}(G_j)$ holds, Proposition~\ref{prop:feas-threshold}
gives $J^\star\le G_j=U_{j+1}$; if it fails, the implication
$\bar\gamma>J^\star\Rightarrow\mathrm{Feas}(\bar\gamma)$ gives
$G_j\le J^\star$, so $L_{j+1}=G_j$. Thus the invariant is
preserved.

After $n$ bisection steps, the interval length is $U_0/2^n$.
Hence the stopping condition is reached after at most
$
\max\left\{
0,
\left\lceil
\log_2\!\left(\frac{2U_0}{\varepsilon}\right)
\right\rceil
\right\}
$
bisection steps. The algorithm then performs one final
strict-feasibility test.

At termination,
$
J^\star\le U_j\le L_j+\frac{\varepsilon}{2}
\le J^\star+\frac{\varepsilon}{2}.
$
Consequently,
$\widehat\gamma=U_j+\frac{\varepsilon}{2}>J^\star$ and
$\widehat\gamma\le J^\star+\varepsilon$.
Proposition~\ref{prop:feas-threshold} guarantees
$\mathrm{StrictFeas}(\widehat\gamma)$. Lemma~\ref{lem:strict-lifted}
applied to the recovered strict point gives $K_\varepsilon\in\cC$
and
$
J(K_\varepsilon)<\widehat\gamma\le J^\star+\varepsilon.
$
Thus $K_\varepsilon$ is an $\varepsilon$-optimal stabilizing
controller.
\end{proof}

\section{Conclusion}
We gave a quantitative and algorithmic treatment of nonsmooth dynamic output-feedback $H_\infty$ policy search via the extended convex lift. On the stable exact identity-gauge slice, we proved that approximate stationarity controls suboptimality on compact exact slices. For global computation, we used the established lifted-value identity to combine nonstrict-feasibility bisection with one strict-feasibility recovery step, producing an explicit $\varepsilon$-optimal stabilizing controller. Future work includes weakening the outer-continuity assumption on the Clarke subdifferential, analyzing concrete algorithms that operate near the exact slice without requiring exact iterates, and extending the framework to structured or reduced-order controllers and data-driven robust control.

\section*{ACKNOWLEDGMENT}
AS and PJ were partially supported by the ONR grant N00014-24-1-2470. Part of this research was performed while the AS was visiting the Institute for Mathematical and Statistical Innovation (IMSI), which is supported by the National Science Foundation (Grant No. DMS-2425650).

\bibliographystyle{IEEEtranN}
\bibliography{references}

\end{document}